\newtheorem{thm}{Theorem}
\newtheorem{defn}{Definition}
\newtheorem{lemma}{Lemma}
\newtheorem{rk}{Remark}
\numberwithin{equation}{section} \setcounter{tocdepth}{1}
\newcommand*{\rom}[1]{\expandafter\@slowromancap\romannumeral #1@}
\begin{document}
\title[Periodic points of a $p$-adic operator]{Periodic points of a $p$-adic operator and their $p$-adic Gibbs measures}

\author{U.~A.~Rozikov, I.~A.~Sattarov, A.~M.~Tukhtabaev}

\address{U.~A.~Rozikov$^{1,2,3}$, I.~A.~Sattarov$^{1,4}$, A.~M.~Tukhtabaev$^4$}

\address{1. Institute of mathematics,
9, University str. Tashkent, 100174, Uzbekistan,}
\address{2. AKFA University, 264, Milliy Bog street,  Yangiobod QFY, Kibray district, 111221, Tashkent region, Uzbekistan,}
\address{3. National University of Uzbekistan,
 University str 4 Olmazor district, Tashkent, 100174, Uzbekistan.}

\address{4. Namangan State University, 316, Uychi
 str. Namangan, 160100, Uzbekistan.}

\email {rozikovu@yandex.ru, sattarovi-a@yandex.ru, akbarxoja.toxtaboyev@mail.ru}

\begin{abstract} In this paper we investigate generalized Gibbs measure (GGM) for $p$-adic Hard-Core(HC) model with a countable set of spin values on a Cayley tree of order $k\geq 2$. This model is defined
	by $p$-adic parameters $\lambda_i$, $i\in \mathbb N$.
	We analyze  $p$-adic functional equation which provides the consistency condition for the finite-dimensional generalized  Gibbs distributions. Each solutions of the functional equation defines a GGM by $p$-adic version of Kolmogorov's theorem.	
	
	We define $p$-adic Gibbs distributions as limit of the consistent family of finite-dimensional generalized  Gibbs distributions and  show that, for our $p$-adic HC model on a Cayley tree, such a Gibbs distribution does not exist. Under some conditions on parameters $p$, $k$ and $\lambda_i$ we find the number of translation-invariant and two-periodic GGMs for the $p$-adic HC model on the Cayley tree of order two.
\end{abstract}
\maketitle

{\bf Mathematics Subject Classifications (2010).} 82B26 (12J12 46S10 60K35)

{\bf{Key words.}} Cayley tree, Gibbs measure, Hard-Core model, $p$-adic generalized Gibbs measure.

\section{Introduction}

Let $\mathbb Q$ be the set of rational numbers. It is known that all numbers found in nature through rational numbers (by changing definition of metric on $\mathbb Q$) are of only two types:

1) The real numbers (the field of numbers that have Archimedian property\footnote{https://en.wikipedia.org/wiki/Archimedean$_-$property});

2) $p$-adic numbers, where $p$ is a prime number. The   non-Archimedean field of  numbers.

So, the problems of studying nature (related to the rational numbers) are modeled only by these two types of numbers, there are no different, third types of numbers! This is a mathematically proven assertion (Ostrowski's theorem, \cite{Ko}).

The theory of $p$-adic numbers is one of very actively developing area in mathematics. Today, numerous applications of $p$-adic numbers are found in many branches of mathematics,
biology, physics and other sciences (see for example \cite{AK}, \cite{KJ}, \cite{Rpa}, \cite{RS}, \cite{RS2}, \cite{S},  \cite{VVZ1994} and the references therein).

In physics, it is confirmed that it is possible to study very small objects through $p$-adic numbers. For example, in standard quantum mechanics, the wave functions describing the evolution of free particles were expressed as wave functions of $p$-adic strings. This result is explained by the fact that the energy of a simple quantum particle actually consists of the sum of the energies of its $p$-adic components. In this paper we consider a $p$-adic operator related to a physical system with a countable set of spin values and having $p$-adic valued potential. We study periodic (in particular, fixed) points of this operator and give corresponding to them $p$-adic Gibbs measures.

\section{Preliminaries}

\subsection{$p$-adic numbers} Let $\mathbb Q$  be the field of rational numbers. It is clear that, for a fixed prime number $p$,
 every $x\in\mathbb Q$,  $ x\neq 0$ can be represented in the form
$x = {p^r}\frac{n}{m}$, where $r,n \in {\mathbb Z}$, $m$ is a
positive integer, moreover $n$ and $m$ are relatively prime with $p$.
This number $r$ is usually (see \cite{Ka}, \cite{Ko}) denoted by ord$_px$, i.e.,
$$r={\rm ord}_px=\left\{\begin{array}{ll}
	\mbox{the highest power of} \, p \, \mbox{which divides} \, x, \,  \mbox{if} \, x\in \mathbb Z,\\[2mm]
	{\rm ord}_pa-{\rm ord}_pb, \, \mbox{if} \, x=a/b, \, a,b\in \mathbb Z, \, b\ne 0.
	\end{array}
	\right.
$$
The $p$-adic norm of $x\in\mathbb Q$ is given by
$$|x|_p = \left\{ \begin{gathered}
  {p^{- r}},\,x \ne 0, \hfill \\
  0,\,\,\,\,\,x = 0. \hfill \\
\end{gathered}  \right.$$

It is well-known that (see \cite{Ko}, \cite{VVZ1994}) this norm is non-Archimedean, i.e. it satisfies the strong
triangle inequality:
$$
|x + y|_p\leq\max \{ |x|_p,|y|_p\},\ \ \ \ \forall x,y\in\mathbb Q.
$$

From this property one get the following

1) if $|x{|_p} \ne |y{|_p}$, then $|x \pm y|_p =\max\{|x{|_p},|y{|_p}\};$

2) if $|x{|_p} = |y{|_p}$, then $|x - y{|_p}\leq|x{|_p}$.

The completion of $\mathbb Q$ with respect to the $p$-adic norm
defines the $p$-adic field $\mathbb Q_p$.
Any $p$-adic number $x \ne 0$ can be uniquely represented in the
canonical form
$$x = {p^{\gamma (x)}}({x_0} + {x_1}p + {x_2}{p^2} + ...),$$
where $\gamma (x)={\rm ord}_px\in {\mathbb Z}$  and ${x_0}\neq 0, {x_j}\in \{0,1,..., p - 1\}, j=1,2,...$. In this case $|x|_p ={p^{ - \gamma (x)}}$.

In \cite{MKh2020} authors have introduced new symbols ``O" and ``o" which allowed to simplify certain calculations. Roughly speaking, these symbols replace the notation $\equiv \ ({\rm mod} \ p^k)$ without noticing about power of $k$. Let us recall them. For a given $p$-adic number $x$ by $O[x]$ we mean a $p$-adic number such that $|x|_p=|O(x)|_p$. By
 $o[x]$, we mean a $p$-adic number such that $|o(x)|_p<|x|_p$.
 For instance,
 if $x=1-p+p^2$, we can write $O[1]=x$, $o[1]=x-1$ or $o[p]=x-1+p$.
Therefore, the symbols $O[\cdot]$ and $o[\cdot]$ make our work easier when we need
to calculate the $p$-adic norm of $p$-adic numbers.
It is easy to see that $y=O[x]$ if and only if $x=O[y]$.

\subsection{$p$-adic quadratic  equation}
We recall that an integer $a\in {\mathbb Z}$ is called \emph{quadratic residue modulo $p$}
if the congruent equation ${x^2}\equiv{a} (\operatorname{mod }p)$ has a solution $x\in{\mathbb Z}$.

Let $p$ be odd prime and $a$ an integer not divisible by $p$. The \emph{Legendre symbol} (see \cite{Rosen}) is defined by
\begin{equation}\label{SL}
	\left(\frac{a}{p}\right)=\left\{
	\begin{array}{ll}
		1,\ \ \ \ \mbox{if} \ \ a \ \mbox{is quadratic residue of} \ \ p; \\[2mm]
		-1,\ \ \ \mbox{if} \ \ a \ \mbox{is quadratic nonresidue of}\ \ p.
	\end{array}\right.
\end{equation}

\begin{lemma}\label{lemmax2=a}\cite{VVZ1994} The equation $${x^2} = a,\,\, a
= {p^{\gamma (a)}}({a_0} + {a_1}p + {a_2}{p^2} + ...),\,\, 0 \leq
{a_j} \leq p - 1, \, {a_0} > 0$$ has a solution in $Q_p$ iff hold true the following:

i) $ \gamma (a)$ is even;

ii) ${x^2} \equiv {a_0}(\operatorname{mod }p)$ is solvable for $p \ne
2$; the equality ${a_1} = {a_2} = 0$ hold if $p = 2$.
\end{lemma}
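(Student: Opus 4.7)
The plan is to treat the two directions separately, in each case reducing to the analysis of the unit part $u = a/p^{\gamma(a)} \in \mathbb{Z}_p^*$, whose leading $p$-adic digit is $a_0$. Throughout, the key is to compare the canonical $p$-adic expansions of $x$ and $a$ digit-by-digit.

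For the necessity direction, I would take $x \in \mathbb{Q}_p$ with $x^2 = a$ and write $x$ in its canonical form $x = p^{\gamma(x)}(x_0 + x_1 p + x_2 p^2 + \dots)$ with $x_0 \ne 0$. Squaring gives $x^2 = p^{2\gamma(x)}(x_0^2 + \dots)$, and comparing $p$-adic orders forces $\gamma(a) = 2\gamma(x)$, which is condition (i). Reducing the unit part modulo $p$ yields $a_0 \equiv x_0^2 \pmod{p}$, so for odd $p$ the digit $a_0$ is a nonzero quadratic residue. For $p = 2$ a finer computation modulo $8$ is required: every odd integer squares to $1 \pmod 8$, and since $a_0 > 0$ forces $a_0 = 1$, the expansion of $u$ must begin $1 + 0\cdot 2 + 0\cdot 4 + \dots$, i.e., $a_1 = a_2 = 0$.

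For the sufficiency, note that if $y \in \mathbb{Z}_p^*$ satisfies $y^2 = u$, then $x = p^{\gamma(a)/2}\, y$ solves $x^2 = a$, so it suffices to extract a square root of the unit $u$. For odd $p$, I would pick $y_0 \in \{1,\dots,p-1\}$ with $y_0^2 \equiv a_0 \pmod{p}$, set $f(X) = X^2 - u$, and observe that $|f(y_0)|_p \le p^{-1}$ while $|f'(y_0)|_p = |2y_0|_p = 1$; the standard form of Hensel's lemma then lifts $y_0$ uniquely to a root $y \in \mathbb{Z}_p$. For $p = 2$ the derivative $f'(y_0) = 2y_0$ is no longer a unit, and the naive Hensel argument fails; using $a_0 = 1$ and $a_1 = a_2 = 0$, I would write $u = 1 + 8v$ for some $v \in \mathbb{Z}_2$ and construct $y$ either by the strong form of Hensel's lemma, whose hypothesis $|f(1)|_2 = |8v|_2 \le 1/8 < 1/4 = |f'(1)|_2^2$ is exactly what the condition $a_1 = a_2 = 0$ guarantees, or by the binomial series $(1 + 8v)^{1/2} = \sum_{n \ge 0} \binom{1/2}{n}(8v)^n$, whose convergence in $\mathbb{Z}_2$ follows from standard estimates on the $2$-adic valuation of $\binom{1/2}{n}$.

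The main obstacle is precisely the $p = 2$ case: the residue characteristic coincides with the degree of the polynomial, so $f'$ vanishes modulo $p$, and the strong Hensel hypothesis demands an extra power of $2$ in $f(y_0)$. Tracking this extra factor is what pins down the requirement $a_1 = a_2 = 0$ (equivalently $u \equiv 1 \pmod 8$) as both necessary and sufficient, and is the step that deserves the most care in the write-up.
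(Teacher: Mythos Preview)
Your argument is correct and is the standard proof of this classical fact. Note, however, that the paper does not actually prove Lemma~\ref{lemmax2=a}: it is quoted from \cite{VVZ1994} without proof, so there is no ``paper's own proof'' to compare against. Your write-up would serve perfectly well as a self-contained justification; the only minor point worth tightening is that in the $p=2$ necessity direction you should make explicit that $a_0$ is automatically $1$ (since it is a nonzero digit in $\{0,1\}$), so the content of condition (ii) really is just $a_1=a_2=0$.
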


\subsection{Cubic equations on $\mathbb Q_p$}
It should be noted that the solvability of the general cubic equation over $\mathbb Q_p$
is equivalent to the solvability of the depressed cubic equation over $\mathbb Q_p$.
Namely, we know that the general cubic equation can be reduced to the following depressed cubic equation
\begin{equation}\label{gen}
	x^3+ax=b,
\end{equation}
here $a,b\in\mathbb Q_p,$ and $ab\neq0.$

In \cite{MOS2014} and \cite{SM2015},  criteria a for solvability of the depressed cubic equation over $\mathbb Q_p$ are given.
We use these criteria in this paper, therefore here we give them.

\textbf{Case $p>3$.}
Let $a=\frac{a^*}{|a|_p}$, $b=\frac{b^*}{|b|_p}$ and $a^*,b^*\in\mathbb Z_p^*=\{x\in \mathbb Q_p: |x|_p=1\}$.

Suppose that $$a^*=a_0+a_1p+a_2p^2+..., \ \ \ b^*=b_0+b_1p+b_2p^2+... .$$
We denote by $D_0=-4a_0^3-27b_0^3,$ $u_1=0, u_2=-a_0, u_3=b_0$, $u_{n+3}=b_0u_n-a_0u_{n+1},$ for any $n=1,2, \dots, p-3$.

\begin{thm}\label{cubeq}\cite{MOS2014} Let $p>3$ be a prime. The equation \eqref{gen} has a solution in $\mathbb Q_p$ if and only if one of the following conditions holds:
	\begin{enumerate}
		\item  $|a|_p^3<|b|_p^2$, $3| {\rm ord}_pb$ and $b_0^{\frac{p-1}{(3,p-1)}}\equiv 1 (\operatorname{mod }p)$;
		\item  $|a|_p^3=|b|_p^2$ and $D_0u_{p-2}^2\not\equiv 9a_0^2(\operatorname{mod }p)$;
		\item  $|a|_p^3>|b|_p^2.$
	\end{enumerate}
\end{thm}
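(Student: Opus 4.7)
The plan is to analyze $x^3+ax=b$ by first using the ultrametric structure of $\mathbb Q_p$ to severely restrict where a root $x$ can live, and then applying Hensel's lemma to lift an approximate root to an exact one. Writing $f(x)=x^3+ax-b$, for any root $x$ the three quantities $|x^3|_p$, $|ax|_p$, $|b|_p$ must, by the strong triangle inequality, have their maximum attained at least twice. A direct case-split on which two are equal yields the trichotomy $|a|_p^3<|b|_p^2$, $|a|_p^3=|b|_p^2$, $|a|_p^3>|b|_p^2$, so the three cases of the theorem exhaust all possibilities.

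For Case (3), $|a|_p^3>|b|_p^2$, I would look for a root with $|x|_p=|b|_p/|a|_p$, so that $|ax|_p=|b|_p$ strictly dominates $|x^3|_p$. Taking the naive approximation $x_0=b/a$ gives $f(x_0)=(b/a)^3$ and $f'(x_0)=3(b/a)^2+a$, and a direct computation with the $O[\cdot]$/$o[\cdot]$ notation of Section 2.1 shows $|f(x_0)|_p<|f'(x_0)|_p^2$, so Hensel's lemma produces a solution with no extra arithmetic hypotheses. For Case (1), $|a|_p^3<|b|_p^2$, the balance of norms forces $|x|_p^3=|b|_p$, which already requires $3\mid \operatorname{ord}_p b$; after factoring out the common $p$-power the problem reduces to solving $y^3\equiv b_0\pmod p$ in $\mathbb Z_p^*$, and by Euler's criterion applied to the cubic residue symbol this is solvable iff $b_0^{(p-1)/(3,p-1)}\equiv 1\pmod p$. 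Since $3y^2$ is a unit at the approximate root, Hensel again lifts the solution.

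Case (2), $|a|_p^3=|b|_p^2$, is the main obstacle because all three terms $|x^3|_p$, $|ax|_p$, $|b|_p$ coincide and no term can be discarded. Here I would rescale $x=\mu y$ with $|\mu|_p$ chosen so that the equation becomes a cubic in $y$ with coefficients in $\mathbb Z_p$ having reductions $a_0,b_0\in\mathbb F_p^*$, i.e.\ $y^3+a_0y\equiv b_0\pmod p$. The question therefore becomes whether this reduced cubic has a root in $\mathbb F_p$, because a simple root then lifts by Hensel. The role of the discriminant $D_0=-4a_0^3-27b_0^2$ (the exponent of $b_0$ in the theorem statement appears to be a typographical slip) is to govern whether the reduced polynomial splits or is irreducible.

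The real difficulty is turning ``has a root in $\mathbb F_p$'' into the explicit congruence $D_0 u_{p-2}^2\not\equiv 9a_0^2\pmod p$. The sequence $\{u_n\}$ is precisely the linear recurrence whose characteristic polynomial is $\lambda^3+a_0\lambda-b_0$, so $u_n$ encodes the power sums of the roots $\alpha_1,\alpha_2,\alpha_3$ in the splitting field. I would express $u_{p-2}$ in terms of these power sums, use Fermat's little theorem on each $\alpha_i^{p-1}$ (which equals $1$ exactly when $\alpha_i\in\mathbb F_p^*$), and combine with the discriminant identity $D_0=-\prod_{i<j}(\alpha_i-\alpha_j)^2$ to translate the condition on $u_{p-2}$ into the existence of an $\mathbb F_p$-root. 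For this delicate symmetric-function computation and the degenerate subcases where $f'$ vanishes at the approximate root, I would invoke the detailed analysis carried out in \cite{MOS2014} and \cite{SM2015}, which is precisely where this explicit criterion was first established.
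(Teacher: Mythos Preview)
The paper does not prove this theorem at all: it is quoted verbatim from \cite{MOS2014} (note the citation attached to the theorem heading) and used as a black box to derive Lemma~\ref{existsol} and Theorem~\ref{N1}. So there is no ``paper's own proof'' to compare your attempt against.

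Your sketch is a plausible outline of the argument in \cite{MOS2014}: the ultrametric trichotomy on $|a|_p^3$ versus $|b|_p^2$ is exactly how that paper organizes the proof, and the Hensel-lifting strategy in Cases (1) and (3) is standard. You are also right that the definition of $D_0$ in the present paper contains a typographical slip (it should read $-4a_0^3-27b_0^2$, matching the discriminant, not $-27b_0^3$). Where your sketch stops short is precisely where the real work lies: in Case (2) you defer the identification of the $\mathbb F_p$-root criterion with the congruence $D_0u_{p-2}^2\not\equiv 9a_0^2\pmod p$ back to \cite{MOS2014}, so your proposal is ultimately circular as a self-contained proof. But since the paper under review makes no claim to reprove this result, that is not a defect in the context of this comparison.
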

Let $D=-4(a|a|_p)^3-27(b|b|_p)^2\neq0, \ \ D=\frac{D^*}{|D|_p},\ \ D^{*}\in\mathbb Z^{*}_p, \ \ D^*=d_0+d_1p+... .$

\begin{thm}\label{Nsol} \cite {MOS2014}
	Let $p>3$ be a prime and $N$ is number of the solutions of (\ref{gen}) in $\mathbb{Q}_p$. Then the following statements hold true:
	\begin{equation}\label{N}
		N=\left\{
		\begin{array}{lllllllllllll}
			3,\ \ \ \ |a|_p^3<|b|_p^2, 3| {\rm ord}_pb, p\equiv 1({\rm mod} 3), b_0^\frac{p-1}{3}\equiv1 (\operatorname{mod }p); \\[1mm]
			3,\ \ \ \ |a|_p^3=|b|_p^2, D=0;\\[1mm]
			3,\ \ \ \ |a|_p^3=|b|_p^2, 0<\left|D\right|_p<1, 2| {\rm ord}_pD,d_0^\frac{p-1}{2}\equiv1 (\operatorname{mod }p);\\[1mm]
			3,\ \ \ \ |a|_p^3=|b|_p^2, \left|D\right|_p=1 \ \ \mbox {and} \ \ u_{p-2}\equiv 0 (\operatorname{mod }p); \\[1mm]
			3,\ \ \ \ |a|_p^3>|b|_p^2, 2|{\rm ord}_pa, \left(-a_0\right)^\frac{p-1}{2}\equiv 1 (\operatorname{mod }p); \\[1mm]
			1,\ \ \ \ |a|_p^3<|b|_p^2, 3|{\rm ord}_pb, p\equiv 2({\rm mod} 3); \\[1mm]
			1,\ \ \ \ |a|_p^3=|b|_p^2, 0<\left|D\right|_p<1, 2|{\rm ord}_pD,d_0^\frac{p-1}{2}\not\equiv1 (\operatorname{{\rm mod} }p);\\[1mm]
			1,\ \ \ \ |a|_p^3=|b|_p^2, 0<\left|D\right|_p<1, 2\nmid {\rm ord}_pD;\\[1mm]
			1,\ \ \ \ |a|_p^3=|b|_p^2, D_0u_{p-2}^2\not\equiv 0(\operatorname{mod }p), D_0u_{p-2}^2\not\equiv 9a_0^2(\operatorname{mod }p); \\[1mm]
			1,\ \ \ \ |a|_p^3>|b|_p^2, 2|{\rm ord}_pa, \left(-a_0\right)^\frac{p-1}{2}\not\equiv 1 (\operatorname{mod }p); \\[1mm]
			1,\ \ \ \ |a|_p^3>|b|_p^2, 2\nmid {\rm ord}_pa; \\[1mm]
			0,\ \ \ \ otherwise.
		\end{array}\right.
	\end{equation}
\end{thm}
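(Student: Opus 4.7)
The plan is to reduce the counting of solutions of \eqref{gen} to a question about the discriminant $\Delta := -4a^3 - 27 b^2$, then combine the resulting dichotomy with Theorem \ref{cubeq} (which settles existence) and Lemma \ref{lemmax2=a} (which detects squares in $\mathbb{Q}_p$) to produce the eleven explicit sub-cases.

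The first step is the following discriminant dichotomy: if $p>3$ and $f(x)=x^3+ax-b$ has at least one root in $\mathbb{Q}_p$, then $f$ has three roots (with multiplicity) in $\mathbb{Q}_p$ iff $\Delta$ is a square in $\mathbb{Q}_p$ (including $\Delta=0$), and exactly one root otherwise. I would prove this by fixing $\alpha\in\mathbb{Q}_p$ with $f(\alpha)=0$, factoring $f(x)=(x-\alpha)(x^2+\alpha x+\alpha^2+a)$, and verifying the identity $\Delta=(3\alpha^2+a)^2(-3\alpha^2-4a)$, so that $\Delta$ is a square iff the quadratic factor's discriminant $-3\alpha^2-4a$ is a square. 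The degenerate case $\Delta=0$ gives $f(x)=(x-\alpha)^2(x+2\alpha)$, whose roots all lie in $\mathbb{Q}_p$, hence $N=3$.

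The second step is to determine ${\rm ord}_p\Delta$ and the leading $p$-adic digit of $\Delta$ in each case of Theorem \ref{cubeq}, then invoke Lemma \ref{lemmax2=a}. When $|a|_p^3>|b|_p^2$, the term $-4a^3$ dominates, giving ${\rm ord}_p\Delta=3\,{\rm ord}_p a$ and leading digit $-4a_0^3\ ({\rm mod}\ p)$; squareness reduces to $2\mid{\rm ord}_p a$ together with $(-a_0)^{(p-1)/2}\equiv 1\,({\rm mod}\ p)$, yielding lines five, ten, and eleven. When $|a|_p^3<|b|_p^2$, the term $-27b^2$ dominates, giving ${\rm ord}_p\Delta=2\,{\rm ord}_p b$ with leading digit $-27b_0^2$; squareness reduces to $-3$ being a quadratic residue modulo $p$, equivalently $p\equiv 1\ ({\rm mod}\ 3)$, yielding lines one and six. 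When $|a|_p^3=|b|_p^2$, set ${\rm ord}_p a=2s$, ${\rm ord}_p b=3s$ and factor $\Delta=p^{6s}D$ with $D$ as in the statement; then $\Delta$ is a square iff $D$ is a square (or $D=0$), and Lemma \ref{lemmax2=a} translates this into the conditions on ${\rm ord}_p D$ and $d_0$ that appear in lines two, three, seven, and eight.

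The main obstacle is the sub-case $|a|_p^3=|b|_p^2$ with $|D|_p=1$, producing lines four and nine. Here the rescaled cubic $y^3+a^*y-b^*$ has non-degenerate reduction $y^3+a_0y-b_0$ over $\mathbb{F}_p$ (since $D_0\not\equiv 0\,({\rm mod}\ p)$), so by Hensel's lemma its $\mathbb{Q}_p$-roots are in bijection with the $\mathbb{F}_p$-roots of the reduction; hence $N=3$ iff the mod-$p$ cubic splits completely, iff $D_0$ is a quadratic residue modulo $p$ (assuming at least one $\mathbb{F}_p$-root, which is what Theorem \ref{cubeq}'s existence condition $D_0u_{p-2}^2\not\equiv 9a_0^2\,({\rm mod}\ p)$ provides). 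The theorem encodes this via the Lucas-type sequence $u_n$, whose characteristic polynomial is $t^3+a_0t-b_0$ over $\mathbb{F}_p$; the key identity to establish, by a Frobenius/trace argument on the roots of this polynomial in $\overline{\mathbb{F}_p}$, is that $u_{p-2}\equiv 0\,({\rm mod}\ p)$ iff the mod-$p$ cubic splits completely over $\mathbb{F}_p$. This converts the QR condition on $D_0$ into the condition on $u_{p-2}$, giving line four for $N=3$ and line nine for $N=1$. Finally, taking the complements of each existence condition in Theorem \ref{cubeq} produces $N=0$ in all remaining cases.
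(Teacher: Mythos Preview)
The paper does not prove this theorem: it is stated with a citation to \cite{MOS2014} and used as a black box, so there is no proof in the paper to compare your proposal against.

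That said, your outline is the standard and correct route to the result. The discriminant identity $\Delta=(3\alpha^2+a)^2(-3\alpha^2-4a)$ is valid, and your dominant-term analysis of $\Delta$ in the cases $|a|_p^3\gtrless|b|_p^2$ correctly reduces squareness of $\Delta$ to the listed congruence conditions (in particular, $-27b_0^2$ is a QR iff $-3$ is, i.e.\ iff $p\equiv1\pmod3$; and $-4a_0^3$ is a QR iff $-a_0$ is). In the borderline case $|a|_p^3=|b|_p^2$ with $|D|_p<1$ or $D=0$, your rescaling $\Delta=p^{6s}D$ is exactly right.

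The only genuinely nontrivial point you defer is the equivalence, over $\mathbb{F}_p$, between ``$y^3+a_0y-b_0$ splits completely'' and ``$u_{p-2}\equiv0\pmod p$''. Your sketch (Frobenius acting on the roots of the characteristic polynomial of the recurrence) is the right idea, but to turn it into a proof you must actually compute $u_{p-2}$ in terms of the roots and show it vanishes precisely when all roots lie in $\mathbb{F}_p$; this requires handling the three Frobenius orbit types (all fixed, one fixed and a $2$-cycle, a $3$-cycle) separately and checking that the initial conditions $u_1=0$, $u_2=-a_0$, $u_3=b_0$ pin down the coefficients so that the resulting expression for $u_{p-2}$ distinguishes the split case. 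As written, this step is asserted rather than proved.
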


\section{Periodic points of the $p$-adic operator}

Let $\mathbb Q_p$ be the field of $p$-adic numbers and let $$c_0=\left\{ \{x_n\}_{n=1}^{\infty}\subset\mathbb Q_p\mid \lim_{n\to\infty}x_n=0 \right\}.$$
Note that $c_0$ is a linear space over $\mathbb Q_p$. We define an operator $F:x=\{x_n\}_{n=1}^{\infty}\in c_0\to y=Fx=\{y_n\}_{n=1}^{\infty}\in c_0$ by the following formula
\begin{equation}\label{o1}
y_n=\lambda_n\cdot\left({1\over{1+\sum_{n=1}^{\infty}x_n}}\right)^2, \end{equation}
where $\lambda=\{\lambda_n\}_{n=1}^{\infty}\in c_0$ is parameter-vector.

In this section we investigate fixed points of the operator (\ref{o1}).

\subsection{Fixed Points.} First, we classify the set of fixed points ${\rm Fix}(F)$. To do this, we find all solutions of the following equation:
\begin{equation}\label{fix}
x_n=\lambda_n\cdot\left({1\over{1+\sum_{n=1}^{\infty}x_n}}\right)^2, \, \lambda=\{\lambda_n\}_{n=1}^{\infty}, \, x=\{x_n\}_{n=1}^{\infty}\in c_0.
\end{equation}

It is known from $p$-adic analysis that for a series to be convergent, it is necessary and sufficient that the limit of its $n$-term is zero. Since $\lambda=\{\lambda_n\}_{n=1}^{\infty}, \, x=\{x_n\}_{n=1}^{\infty}\in c_0$, we can denote $z=\sum_{n=1}^{\infty}x_n$ and $\theta=\sum_{n=1}^{\infty}\lambda_n$. 

If $z=-1$ then the equation (\ref{fix}) does not make sense. Hence, we assume that $z\neq-1$. Summarizing both hand sides of the equation \eqref{fix} we get 
\begin{equation}\label{fix1}
z(1+z)^2=\theta, \ \ z, \theta\in \mathbb Q_p.
\end{equation}

By substitution $z=t-\frac{2}{3}$ we get
\begin{equation}\label{eqt}
t^3-\frac{t}{3}-\left(\theta+\frac{2}{27}\right) =0.
\end{equation}

This is equation (\ref{gen}) with $a=-\frac{1}{3}, b=\theta+\frac{2}{27}$. For $p>3$ we get $|a|_p=1$. 

We denote
\begin{equation}\label{ab}
-{1\over 3}=a_0+a_1p+a_2p^2+..., \ \ \theta+\frac{2}{27}={1\over{|\theta+\frac{2}{27}|_p}}\cdot(b_0+b_1p+b_2p^2+...),
\end{equation}
$$D_0=-4a_0^3-27b_0^3, \ \ u_1=0, \, u_2=-a_0, \,  u_3=b_0,$$ $$u_{n+3}=b_0u_n-a_0u_{n+1}, \mbox{for any} \, n=1,2, \dots, p-3.$$
Then using Theorem \ref{cubeq} we get the following:

\begin{lemma}\label{existsol} Let $p>3$ be a prime and $\theta\neq-\frac{2}{27}$. The equation \eqref{eqt} has a solution in $\mathbb Q_p$ if and only if one of the following conditions holds true:
\begin{enumerate}
 \item $|\theta+\frac{2}{27}|_p>1$, $3|{\rm ord}_p \left(\theta+\frac{2}{27}\right)$ and $b_0^{\frac{p-1}{(3,p-1)}}\equiv 1 (\operatorname{mod }p)$;
 \item $|\theta+\frac{2}{27}|_p=1$ and $D_0u_{p-2}^2\not\equiv 9a_0^2(\operatorname{mod }p)$;
 \item $|\theta+\frac{2}{27}|_p<1$.
 \end{enumerate}
\end{lemma}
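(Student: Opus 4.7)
The plan is to recognize equation (\ref{eqt}) as a depressed cubic of the form (\ref{gen}) with $a=-\tfrac13$ and $b=\theta+\tfrac{2}{27}$, and then invoke Theorem \ref{cubeq} directly. The hypothesis $\theta\neq -\tfrac{2}{27}$ guarantees $b\neq 0$, and because $p>3$ we have $|a|_p=|-\tfrac13|_p=1$, so in particular $ab\neq 0$ and Theorem \ref{cubeq} applies.

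The first step is to rewrite the trichotomy $|a|_p^3\lessgtr |b|_p^2$ in the current setting. Since $|a|_p^3=1$, the three cases become, respectively, $|\theta+\tfrac{2}{27}|_p>1$, $|\theta+\tfrac{2}{27}|_p=1$, and $|\theta+\tfrac{2}{27}|_p<1$, matching exactly the case split in the statement.

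The second step is to check that the auxiliary data $a_0,b_0,D_0$ and the sequence $u_n$ introduced in (\ref{ab}) coincide with the ones demanded by Theorem \ref{cubeq}. The unit parts $a^{*}=a|a|_p^{-1}=-\tfrac13$ and $b^{*}=b|b|_p^{-1}=(\theta+\tfrac{2}{27})\cdot\tfrac{1}{|\theta+2/27|_p}$ have expansions $a_0+a_1p+\cdots$ and $b_0+b_1p+\cdots$ exactly as in (\ref{ab}); the definitions of $D_0$ and of the recurrence $u_{n+3}=b_0u_n-a_0u_{n+1}$ are verbatim those of Theorem \ref{cubeq}. Hence the three criteria in Theorem \ref{cubeq} translate into the three conditions listed in the lemma: in case (1) the requirements $3\mid{\rm ord}_p b$ and $b_0^{(p-1)/(3,p-1)}\equiv 1\pmod{p}$ rewrite verbatim with $b=\theta+\tfrac{2}{27}$; in case (2) the condition $D_0u_{p-2}^2\not\equiv 9a_0^2\pmod{p}$ is copied directly; and in case (3) there is no additional hypothesis.

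Since Theorem \ref{cubeq} is an \emph{if and only if} statement, the equivalence in the lemma follows. There is no substantive obstacle beyond the notational bookkeeping in the second step: one must only confirm that the normalization $b^{*}=b\cdot|b|_p$ used in Theorem \ref{cubeq} matches the parametrization $\theta+\tfrac{2}{27}=\tfrac{1}{|\theta+2/27|_p}(b_0+b_1p+\cdots)$ chosen in (\ref{ab}), which is immediate from the definition of $|\cdot|_p$.
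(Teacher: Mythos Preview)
Your proposal is correct and is exactly the paper's approach: the lemma is stated immediately after identifying $a=-\tfrac13$, $b=\theta+\tfrac{2}{27}$ and noting $|a|_p=1$ for $p>3$, with the paper simply writing ``Then using Theorem \ref{cubeq} we get the following''. The only slip is the normalization $a^{*}=a|a|_p^{-1}$ (it should be $a^{*}=a|a|_p$, as you in fact write later for $b^{*}$), but since $|a|_p=1$ here this is immaterial.
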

\begin{rk}\label{ab=0} If $\theta=-\frac{2}{27}$ then the equation \eqref{eqt} has at least one solution in $\mathbb Q_p$, i.e. $t=0$ is solution of the equation \eqref{eqt}.
\end{rk}

Let $D={4\over 27}-27(\theta+\frac{2}{27})^2|\theta+\frac{2}{27}|_p^2$. If $D\neq 0$, then we denote $D=\frac{D^*}{|D|_p},  D^{*}\in\mathbb Z^{*}_p,$ $D^*=d_0+d_1p+... \, .$
Then using Theorem \ref{Nsol} we get the following theorem about the classification of the set ${\rm Fix}(F)$.

\begin{thm}\label{N1} Let $p>3$ be a prime and $|{\rm Fix}(F)|$ is number of the fixed points of the operator $F$. Then the following statements hold:
$$
|{\rm Fix}(F)|=\left\{
\begin{array}{llllllllllll}
3,\ \ \ \ |\theta+\frac{2}{27}|_p^2>1, 3|{\rm ord}_p(\theta+\frac{2}{27}), p\equiv 1(\operatorname{mod }3), b_0^\frac{p-1}{3}\equiv1 (\operatorname{mod }p); \\[1mm]
3,\ \ \ \ |\theta+\frac{2}{27}|_p=1, D=0;\\[1mm]
3,\ \ \ \ |\theta+\frac{2}{27}|_p=1, 0<\left|D\right|_p<1, 2|{\rm ord}_pD,d_0^\frac{p-1}{2}\equiv1 (\operatorname{mod }p);\\[1mm]
3,\ \ \ \ |\theta+\frac{2}{27}|_p=1, \left|D\right|_p=1 \ \ 
\mbox{and} \ \ u_{p-2}\equiv 0 (\operatorname{mod }p); \\[1mm]
3,\ \ \ \ |\theta+\frac{2}{27}|_p<1, \left(-a_0\right)^\frac{p-1}{2}\equiv 1 (\operatorname{mod }p); \\[1mm]
1,\ \ \ \ |\theta+\frac{2}{27}|_p>1, 3|{\rm ord}_p(\theta+\frac{2}{27}), p\equiv 2(mod 3); \\[1mm]
1,\ \ \ \ |\theta+\frac{2}{27}|_p=1, 0<\left|D\right|_p<1, 2|{\rm ord}_pD,d_0^\frac{p-1}{2}\not\equiv1 (\operatorname{mod }p);\\[1mm]
1,\ \ \ \ |\theta+\frac{2}{27}|_p=1, 0<\left|D\right|_p<1, 2\nmid {\rm ord}_pD;\\[1mm]
1,\ \ \ \ |\theta+\frac{2}{27}|_p=1, D_0u_{p-2}^2\not\equiv 0(\operatorname{mod }p), D_0u_{p-2}^2\not\equiv 9a_0^2(\operatorname{mod }p); \\[1mm]
1,\ \ \ \ |\theta+\frac{2}{27}|_p<1, \left(-a_0\right)^\frac{p-1}{2}\not\equiv 1 (\operatorname{mod }p); \\[1mm]
0,\ \ \ \ otherwise.
\end{array}\right.
$$
\end{thm}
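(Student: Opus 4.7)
The plan is to reduce $|{\rm Fix}(F)|$ to counting the roots in $\mathbb Q_p$ of the depressed cubic \eqref{eqt}, then to apply Theorem \ref{Nsol} with $a=-1/3$ and $b=\theta+2/27$, and finally to verify that each numerical condition there transcribes into the form stated in Theorem \ref{N1}.

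The first step, essentially already carried out in the excerpt, is the bijection between ${\rm Fix}(F)$ and the scalar solutions. Any fixed point $x$ gives, upon summing \eqref{fix}, a solution $z=\sum_n x_n\neq -1$ of \eqref{fix1}; conversely, any such $z$ determines a unique fixed point by $x_n:=\lambda_n/(1+z)^2$, which lies in $c_0$ because $\lambda\in c_0$. The value $z=-1$ satisfies \eqref{fix1} only when $\theta=0$, and in that case \eqref{fix} is itself undefined at $z=-1$, so no fixed point is lost. Via the substitution $z=t-2/3$, $|{\rm Fix}(F)|$ equals the number of roots in $\mathbb Q_p$ of \eqref{eqt}.

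Next I would feed \eqref{eqt} into Theorem \ref{Nsol}. Because $p>3$, one has $|a|_p=|-1/3|_p=1$ and ${\rm ord}_p a=0$, so the three cases $|a|_p^3<|b|_p^2$, $|a|_p^3=|b|_p^2$, $|a|_p^3>|b|_p^2$ become $|\theta+2/27|_p>1$, $=1$, $<1$ respectively (and the form $|\theta+2/27|_p^2>1$ used in the statement is equivalent to $|\theta+2/27|_p>1$). A routine substitution shows that $-4(a|a|_p)^3-27(b|b|_p)^2$ equals $4/27-27(\theta+2/27)^2|\theta+2/27|_p^2$, matching the $D$ defined just before Theorem \ref{N1}. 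The lines of Theorem \ref{Nsol} that require $2\nmid{\rm ord}_p a$ are vacuous here since ${\rm ord}_p a=0$, which is precisely why Theorem \ref{N1} contains fewer entries than Theorem \ref{Nsol}.

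The one subtlety I anticipate is the degenerate case $\theta=-2/27$, which gives $b=0$ and is formally excluded by the hypothesis $ab\neq 0$ of Theorems \ref{cubeq} and \ref{Nsol}. In that case \eqref{eqt} reduces to $t(t^2-1/3)=0$, whose root count is $3$ if $1/3$ is a square in $\mathbb Q_p$ and $1$ otherwise; by Lemma \ref{lemmax2=a} (Euler's criterion), squareness of $1/3$ amounts to $(1/3)^{(p-1)/2}\equiv 1\pmod p$, which is exactly $(-a_0)^{(p-1)/2}\equiv 1\pmod p$ with $a=-1/3$. Hence the $|\theta+2/27|_p<1$ branch of the theorem continues to give the correct count at the boundary value $b=0$. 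With this compatibility check, each of the eleven cases of Theorem \ref{N1} follows as a direct transcription of the corresponding case of Theorem \ref{Nsol}, and I expect this verification to be the only non-routine step.
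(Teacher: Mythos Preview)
Your approach is essentially identical to the paper's: reduce to the depressed cubic \eqref{eqt} with $a=-1/3$, $b=\theta+2/27$, observe $|a|_p=1$ for $p>3$, and transcribe the cases of Theorem~\ref{Nsol}. The paper's own proof is much terser---it checks only the first line and declares the rest similar---so your explicit handling of the bijection between ${\rm Fix}(F)$ and the $z$-solutions, the remark that ${\rm ord}_p a=0$ kills the $2\nmid{\rm ord}_p a$ lines, and the separate treatment of the degenerate value $b=0$ are all welcome additions that the paper omits.
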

\begin{proof} To prove this theorem, we have to check all  conditions of Theorem \ref{Nsol}. However, we shall content ourselves with checking the first condition, since the rest can be checked similarly. First condition of Theorem \ref{Nsol} has the following form
 $$|a|_p^3<|b|_p^2, \, 3| {\rm ord}_pb, \, p\equiv 1(\operatorname{mod }3), \, b_0^\frac{p-1}{3}\equiv1 (\operatorname{mod }p).$$ In our case $a=-{1\over 3}$ and $b=\theta+\frac{2}{27}$. Since $p>3$, by (\ref{ab}) we have $$\left|\theta+\frac{2}{27}\right|_p^2>1, 3|{\rm ord}_p\left(\theta+\frac{2}{27}\right), p\equiv 1(\operatorname{mod }3), b_0^\frac{p-1}{3}\equiv1 (\operatorname{mod }p).$$
\end{proof}

\textbf{Case $p=3$.}
In this case we denote
$$\mathbb Z_3^*[i_0,i_1,...,i_k]=\left\{x^*\in\mathbb Z_3^*:x^*=i_0+i_1 3+...+i_k 3^k+x_{k+1} 3^{k+1}+...\right\},$$
$$\mathbb Z_3^*[i_0,i_1,...,i_k|j_0,j_1,...,j_s]=\mathbb Z_3^*[i_0,i_1,...,i_k]\times \mathbb Z_3^*[j_0,j_1,...,j_s],$$
$${\Delta_{11}} = \bigcup\limits_{i,j=0}^2 {\mathbb Z_3^*} [2,i,j|1,2,i,j], \ \  {\Delta_{12}} =\bigcup\limits_{i,j=0}^2 {\mathbb Z_3^*} [2,1,j|1,2,1,j+1],$$
$${\Delta_{13}} = \bigcup\limits_{i,j=0}^2 {\mathbb Z_3^*} [2,i+1,j+1|1,2,i+1,j],$$ $${\Delta_{21}} =\bigcup\limits_{i,j=0}^2 {\mathbb Z_3^*} [2,i+j,i|1,0,2-(i+j),j], \ \ {\Delta_{22}} = \bigcup\limits_{i,j=0}^2 {\mathbb Z_3^*} [2,0,2-j|2,0,2,j],$$ $${\Delta_{23}} = \bigcup\limits_{i,j=0}^2 {\mathbb Z_3^*} [2,3+i,j|2,0,i-1,1-(i+j)],$$
$$\Delta_1=\Delta_{11}\cup\Delta_{12}\cup\Delta_{13}, \ \  \Delta_2=\Delta_{21}\cup\Delta_{22}\cup\Delta_{23}, \ \ \Delta=\Delta_1\cup\Delta_2.$$
\begin{thm}\label{exstp3}\cite{SM2015} The equation \eqref{gen} has a solution in $\mathbb Q_3$ if and only if one of the following conditions holds true:
\begin{enumerate}
  \item $\left|a\right|_3^3>\left|b\right|_3^2$;
  \item $\left|a\right|_3^3=\left|b\right|_3^2$ and $a^*\in\mathbb Z_3^*[1]$;
  \item $\left|a\right|_3^3<\left|b\right|_3^2$, $3| {\rm ord}_3b$ and 
  
i) $\left|\frac{a}{3}\right|_3^3=\left|b\right|_3^2$, $(a^*,b^*)\in\mathbb Z_3^*[1|1,1]\cup\mathbb Z_3^*[1|2,1]\cup\Delta$;

ii) $\left|\frac{a}{3}\right|_3^3<\left|b\right|_3^2$, $b^*\in\mathbb Z_3^*[1,0]\cup\mathbb Z_3^*[2,2]$.
\end{enumerate}
\end{thm}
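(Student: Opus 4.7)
The plan is to reduce the solvability question to a Newton polygon analysis of $f(x)=x^3+ax-b$ over $\mathbb{Q}_3$, and then lift candidate roots with (an appropriate strong form of) Hensel's lemma. Form the Newton polygon from the points $(0,\mathrm{ord}_3 b)$, $(1,\mathrm{ord}_3 a)$, $(3,0)$. The three regimes $|a|_3^3>|b|_3^2$, $|a|_3^3=|b|_3^2$, $|a|_3^3<|b|_3^2$ correspond to the polygon having respectively two segments, one segment through all three vertices, and one segment through $(0,\mathrm{ord}_3 b)$ and $(3,0)$. In each case the slopes read off the 3-adic valuations of the roots, and a root lies in $\mathbb{Q}_3$ only if its valuation is an integer and an associated leading-coefficient residue equation has a solution modulo a suitable power of $3$.

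First I would dispose of case (1): when $|a|_3^3>|b|_3^2$ the Newton polygon has a short segment from $(0,\mathrm{ord}_3 b)$ to $(1,\mathrm{ord}_3 a)$ of integer slope $\mathrm{ord}_3 b-\mathrm{ord}_3 a$. Setting $x=3^{\mathrm{ord}_3 b-\mathrm{ord}_3 a}\cdot u$ turns $f$ into a polynomial whose reduction mod $3$ is $-au+\text{(unit)}$, so $u\equiv b/a\pmod 3$ satisfies $f(u)\equiv 0$ with $f'(u)\not\equiv 0\pmod 3$, and ordinary Hensel's lemma lifts it. Next I would handle case (2): since $|a|_3^3=|b|_3^2$ forces $2\mid\mathrm{ord}_3 a$ and $3\mid\mathrm{ord}_3 b$, I can rescale by $x=3^{\mathrm{ord}_3 b/3}u$ to reduce to $a,b\in\mathbb{Z}_3^*$, and then the condition $a^*\in\mathbb{Z}_3^*[1]$ (i.e., $a\equiv 1\pmod 3$) ensures $f'(u)=3u^2+a$ is a unit mod $3$ at any residue; one then checks that the reduced equation $u^3+u-b\equiv 0\pmod 3$ is solvable for each $b\in\mathbb{Z}_3^*$ and lifts by Hensel.

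The main obstacle, as expected, is case (3), where $|a|_3^3<|b|_3^2$, all three roots have valuation $\mathrm{ord}_3 b/3$ (hence the necessity $3\mid\mathrm{ord}_3 b$), and the derivative $f'(x)=3x^2+a$ is \emph{not} a unit at the candidate roots because $p=3$ equals the degree. The naive Hensel lemma fails; instead one uses the strong Hensel lemma which allows $|f(u_0)|_3<|f'(u_0)|_3^2$. After rescaling to $b\in\mathbb{Z}_3^*$, the candidate residues for $u$ are determined by $u^3\equiv b\pmod 3$, which in $\mathbb{F}_3$ is just $u\equiv b\pmod 3$; lifting past the first step requires expanding $u=u_0+3u_1+9u_2+\cdots$ and tracking residues of $a$ and $b$ mod successively higher powers of $3$. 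This is precisely what the auxiliary sets $\mathbb{Z}_3^*[i_0,\dots,i_k]$ and $\Delta_{ij}$ encode: the split between subcases (i) and (ii) reflects whether the additional term $au$ in $f$ is still relevant at the level of refinement needed for the strong Hensel lifting, and the explicit lists of allowed digits $(a^*,b^*)$ are exactly the residue classes for which the iterative lifting either closes up (giving a root) or hits an obstruction.

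I would carry out the subcase (ii) $|a/3|_3^3<|b|_3^2$ first, since there the $au$ term is negligible through two additional lifting steps and one only needs $b^*\equiv 1\pmod 9$ or $b^*\equiv 8\pmod 9$ to get a cube, matching $\mathbb{Z}_3^*[1,0]\cup\mathbb{Z}_3^*[2,2]$. Subcase (i) $|a/3|_3^3=|b|_3^2$ is the hardest and requires running the lifting recursion three or four digits deep while carrying along both $a^*$ and $b^*$; the resulting compatibility conditions assemble into the union $\mathbb{Z}_3^*[1|1,1]\cup\mathbb{Z}_3^*[1|2,1]\cup\Delta$. The bookkeeping here is the genuine difficulty, and I would organize it by writing $x=3^{\mathrm{ord}_3 b/3}(u_0+3u_1+9u_2+27u_3+\cdots)$, expanding $f$, and solving for $u_j$ inductively, reading off at each step which digit pairs $(a_j,b_j)$ permit continuation. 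The final theorem is obtained by collecting the necessary and sufficient conditions produced at the last step where the recursion terminates, together with the easy cases (1) and (2).
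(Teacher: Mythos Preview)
The paper does not prove this theorem: it is quoted verbatim from the cited reference \cite{SM2015} (Saburov--Ahmad) and used as a black box. There is therefore no proof in the present paper to compare your proposal against.

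On its own merits, your outline is the standard and correct strategy. Cases (1) and (2) go through exactly as you describe: in (1) the Newton polygon has a length-one segment, which automatically yields a $\mathbb{Q}_3$-root; in (2) the rescaled equation $u^3+a^*u-b^*=0$ with $a^*,b^*\in\mathbb{Z}_3^*$ has $u^3+u\equiv 2u\pmod 3$ when $a^*\equiv 1$, giving a simple root that Hensel lifts, while $a^*\equiv 2$ forces $u^3+2u\equiv 0\pmod 3$ and hence no unit root. Your identification of subcase (3)(ii) with the cube-root condition $b^*\equiv 1,8\pmod 9$ is also correct.

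The only caveat is that your treatment of subcase (3)(i), where $|a/3|_3^3=|b|_3^2$, is not a proof but a promise: you correctly note that the $\Delta$-sets encode digit-by-digit compatibility constraints arising from the strong Hensel recursion, but you do not actually run that recursion. In the original reference this case occupies the bulk of the argument and requires tracking residues of both $a^*$ and $b^*$ through four $3$-adic digits; the combinatorics producing the six families $\Delta_{11},\dots,\Delta_{23}$ is genuinely intricate and does not fall out of a one-line observation. Your plan is right, but the claim that the conditions ``assemble into'' the stated union is exactly the content of the theorem and would need to be verified in full.
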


\begin{thm}\label{Nsolp3}\cite{SM2015} Let $N_2$ denote number of the solutions of (\ref{gen}) in $\mathbb{Q}_3$. Then the following statements hold true:
\begin{equation}\label{N2}
N_2=\left\{
\begin{array}{lllllllll}
3,\ \ \ \ |a|_3^3>|b|_3^2, 2| {\rm ord}_3a, a^*\in\mathbb Z_3^*[2]; \\[1mm]
1,\ \ \ \ |a|_3^3>|b|_3^2, 2| {\rm ord}_3a, a^*\not\in\mathbb Z_3^*[2];\\[1mm]
1,\ \ \ \ |a|_3^3>|b|_3^2, 2\nmid {\rm ord}_3a;\\[1mm]
1,\ \ \ \ |a|_3^3=|b|_3^2, a^*\in\mathbb Z_3^*[1]; \\[1mm]
1,\ \ \ \ |a|_3^3<|b|_3^2, 3| {\rm ord}_3b, \left|\frac{a}{3}\right|_3^3=|b|_3^2, \\[1mm]
   \ \ \ \ (a^*,b^*)\in\mathbb Z_3^*[1|2,1]\cup\mathbb Z_3^*[1|1,1]\cup\Delta; \\[1mm]
1,\ \ \ \ |a|_3^3<|b|_3^2, 3| {\rm ord}_3b, \left|\frac{a}{3}\right|_3^3<|b|_3^2, \\[1mm]
     \ \ \ \ b^*\in\mathbb Z_3^*[1,0]\cup\mathbb Z_3^*[2,2].\\[1mm]
\end{array}\right.
\end{equation}
\end{thm}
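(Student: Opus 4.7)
The plan is to analyze the depressed cubic $x^3+ax-b=0$ over $\mathbb{Q}_3$ by combining Newton-polygon reasoning with careful Hensel lifting, mirroring what was done for $p>3$ in Theorem~\ref{Nsol} but taking into account that $p=3$ is the residue characteristic. First I would write $a=a^*/|a|_3$ and $b=b^*/|b|_3$ with $a^*,b^*\in\mathbb{Z}_3^*$, and split into the three mutually exclusive regimes $|a|_3^3>|b|_3^2$, $|a|_3^3=|b|_3^2$, and $|a|_3^3<|b|_3^2$. The Newton polygon of $f(x)=x^3+ax-b$ has vertices determined by $(0,\mathrm{ord}_3 b)$, $(1,\mathrm{ord}_3 a)$, $(3,0)$, so in each regime the slopes prescribe the possible $3$-adic valuations of any root: in the first regime, one root of valuation $\mathrm{ord}_3 b-\mathrm{ord}_3 a$ and possibly two roots of valuation $\tfrac{1}{2}\mathrm{ord}_3 a$; in the second regime a single common slope with all three roots of the same valuation; and in the third regime all three roots of valuation $\tfrac{1}{3}\mathrm{ord}_3 b$.

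Next, for each admissible valuation $\gamma$ I would substitute $x=3^{\gamma}y$ with $y\in\mathbb{Z}_3^*$, clear denominators, and produce a unit-normalized cubic $g_\gamma(y)\in\mathbb{Z}_3[y]$. The question of counting solutions reduces to counting units $y\in\mathbb{Z}_3^*$ with $g_\gamma(y)=0$, and this is done by iterated Hensel lifting: one enumerates the zero residues $y_0$ of $g_\gamma$ modulo an appropriate power $3^k$, and for each checks whether the strong Hensel condition $|g_\gamma(y_0)|_3<|g_\gamma'(y_0)|_3^2$ holds, ensuring a unique lift to $\mathbb{Z}_3^*$. Integer parity of $\mathrm{ord}_3 a$ or $\mathrm{ord}_3 b$ in the theorem accounts for whether such $y\in\mathbb{Z}_3^*$ exists at all (one cannot have $y\in\mathbb{Z}_3^*$ if the required valuation $\gamma$ is not an integer).

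The main obstacle — and the reason the statement is so intricate — is that for $p=3$ the derivative $g_\gamma'(y)$ typically has $3$-adic norm strictly less than $1$, so the naive Hensel test fails at the first level and one has to descend to modulus $3^2$, $3^3$, and sometimes $3^4$ before achieving $|g_\gamma(y_0)|_3<|g_\gamma'(y_0)|_3^2$. This is precisely what the membership conditions $a^*\in\mathbb{Z}_3^*[1]$, $a^*\in\mathbb{Z}_3^*[2]$, $(a^*,b^*)\in\mathbb{Z}_3^*[1|1,1]\cup\mathbb{Z}_3^*[1|2,1]\cup\Delta$, $b^*\in\mathbb{Z}_3^*[1,0]\cup\mathbb{Z}_3^*[2,2]$, encode: each specifies the first few $3$-adic digits of $a^*$ or $b^*$ needed to make the higher-level Hensel test succeed. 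The compound sets $\Delta_{ij}$ arise because in the borderline subcase $|a/3|_3^3=|b|_3^2$ of regime three, the relation between the digits of $a^*$ and of $b^*$ must be coordinated over several places before a root can be extracted.

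Once the correct Hensel level is reached, each valid residue class $y_0$ contributes exactly one solution $y\in\mathbb{Z}_3^*$ and hence one $x\in\mathbb{Q}_3$, so the count $N_2$ is obtained by tallying, in each case, how many zero residues modulo the appropriate power of $3$ actually satisfy the refined Hensel condition. The three-root cases listed in \eqref{N2} correspond to those configurations in which either the factorization $y^2+\text{(unit)}$ appearing in the residue reduction splits into two distinct residues (giving $1+2=3$ roots), or in which the cubic residue equation admits three distinct roots directly; all other admissible configurations yield exactly one root, and everything outside the listed cases leaves no residue passing the lifting test and hence produces zero solutions. Assembling these counts reproduces the piecewise formula \eqref{N2}, completing the proof.
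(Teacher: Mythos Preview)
The paper does not prove this theorem at all: it is quoted verbatim from \cite{SM2015} and used as a black box to derive Theorem~\ref{unique}. There is therefore no ``paper's own proof'' to compare your sketch against.

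As for your outline itself, the strategy---Newton polygon to locate the valuations of roots, then rescale and apply iterated Hensel lifting modulo increasing powers of $3$---is the standard one and is presumably close to what \cite{SM2015} actually does. But what you have written is a plan, not a proof: you identify the obstacle (the derivative $3y^2+a^*$ has small $3$-adic norm, forcing deeper congruence checks) and assert that the digit conditions on $a^*,b^*$ encode exactly when the refined Hensel test succeeds, yet you do not verify any of these claims. The intricate sets $\Delta_{ij}$ in the borderline subcase $|a/3|_3^3=|b|_3^2$ are precisely where the real work lies, and your sketch gives no indication of how the specific digit patterns in $\Delta_1,\Delta_2$ arise or why they are exhaustive. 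To turn this into a proof you would need to carry out, case by case, the residue computations modulo $3^k$ for $k$ up to $4$ and exhibit in each listed case the unique (or three) lifts, and in each excluded case the obstruction. Without that, the proposal explains the shape of the theorem but does not establish it.
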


In our case $a=-\frac{1}{3}=3^{-1}(2+2\cdot3+2\cdot3^2+...), \, b=\theta+\frac{2}{27}$ and $|a|_3=3.$ From this we have ${\rm ord}_3a=-1$,
 $a^*=a|a|_p=2+2\cdot3+2\cdot3^2+...$ and $a^*\in\mathbb Z_3^*[2,2,...,2]$.

 Using Theorem \ref{exstp3} we get the following

\begin{lemma}\label{exsolp3} The equation \eqref{eqt} has a solution in $\mathbb Q_3$ if and only if one of the following conditions holds true:
\begin{itemize}
  \item $|\theta+\frac{2}{27}|_3^2<27$;
  \item $\left|\theta+\frac{2}{27}\right|_3>27$, $3|{\rm ord}_3\left(\theta+\frac{2}{27}\right)$, $b^*\in\mathbb Z_3^*[1,0]\cup\mathbb Z_3^*[2,2]$.
\end{itemize}
\end{lemma}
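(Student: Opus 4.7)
The plan is to apply Theorem \ref{exstp3} to equation \eqref{eqt}, viewed as \eqref{gen} with $a = -\frac{1}{3}$ and $b = \theta + \frac{2}{27}$, using the $3$-adic data for $a$ recorded just before the lemma: $|a|_3 = 3$ so $|a|_3^3 = 27$; $a^* = 2 + 2 \cdot 3 + 2 \cdot 3^2 + \cdots \in \mathbb{Z}_3^*[2,2,2]$, i.e.\ every digit of $a^*$ equals $2$; and ${\rm ord}_3 a = -1$. I would then walk through the three solvability cases of Theorem \ref{exstp3} one at a time and translate each into a condition on $\theta + \frac{2}{27}$.

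Case 1, $|a|_3^3 > |b|_3^2$, rewrites as $\left|\theta + \frac{2}{27}\right|_3^2 < 27$, which is exactly the first bullet. Case 2 requires $|a|_3^3 = |b|_3^2$, which cannot occur: $|b|_3^2$ is an even power of $3$ whereas $|a|_3^3 = 3^3$ is odd. (The hypothesis $a^* \in \mathbb{Z}_3^*[1]$ also fails independently, since $a^*$'s leading digit is $2$.) Case 3 assumes $|a|_3^3 < |b|_3^2$ and $3 \mid {\rm ord}_3 b$, so $\left|\theta + \frac{2}{27}\right|_3^2 > 27$ and $3 \mid {\rm ord}_3\!\left(\theta + \frac{2}{27}\right)$; since $|b|_3^2$ is a power of $9$, this forces $|b|_3 \in \{27,\, 3^6,\, 3^9,\, \ldots\}$.

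Case 3 splits further. Subcase (ii), $|a/3|_3^3 < |b|_3^2$, means $|b|_3 > 27$ and comes with $b^* \in \mathbb{Z}_3^*[1,0] \cup \mathbb{Z}_3^*[2,2]$, giving exactly the second bullet. Subcase (i), $|a/3|_3^3 = |b|_3^2$, means $|b|_3 = 27$ and requires $(a^*, b^*) \in \mathbb{Z}_3^*[1|1,1] \cup \mathbb{Z}_3^*[1|2,1] \cup \Delta$; the first two summands demand $a^* \in \mathbb{Z}_3^*[1]$ and are ruled out by $a^*$ starting with digit $2$. For $\Delta = \bigcup_{k=1}^{3}(\Delta_{1k} \cup \Delta_{2k})$ one inserts $a^* = \overline{2,2,2,\ldots}$ into each first-coordinate template (such as $[2,i,j]$, $[2,i+1,j+1]$, $[2,i+j,i]$), reads off which indices $i,j \in \{0,1,2\}$ are compatible, and collects the resulting constraints on $b^*$ to verify that they do not contribute a case beyond those already listed in the lemma.

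I expect the main obstacle to be precisely this $\Delta$-analysis at $|b|_3 = 27$: one has to interpret whether the sums $i + j$, $i + 1$, etc.\ in the definitions of the $\Delta_{ij}$ are reduced modulo $3$ or merely restricted to the digit range $\{0,1,2\}$, and then argue that no genuinely new solvability pattern on $b^*$ survives once $a^* = \overline{2,2,2,\ldots}$ is plugged in. Past this bookkeeping the proof is a direct transcription of Theorem \ref{exstp3}, exactly parallel to the deduction of Lemma \ref{existsol} from Theorem \ref{cubeq} for $p > 3$.
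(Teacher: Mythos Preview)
Your plan is essentially the paper's proof: apply Theorem~\ref{exstp3} case by case to $a=-\tfrac13$, $b=\theta+\tfrac{2}{27}$, discard case~2 because $a^*\in\mathbb Z_3^*[2]$ (your parity remark that $|b|_3^2$ cannot equal $27$ is an extra observation the paper does not make), read off subcase~3(ii) as the second bullet, and argue that subcase~3(i) contributes nothing. The one place you go further than the paper is the $\Delta$-analysis at $|b|_3=27$: the paper simply asserts $(a^*,b^*)\notin\mathbb Z_3^*[1|1,1]\cup\mathbb Z_3^*[1|2,1]\cup\Delta$ without walking through the individual $\Delta_{jk}$, so the bookkeeping you flag as the main obstacle is not spelled out there either.
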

\begin{proof} We are going to check all conditions of  Theorem \ref{exstp3} for our case.\\
1)$\left|a\right|_3^3>\left|b\right|_3^2$ yields $\left|\theta+\frac{2}{27}\right|_3^2<27.$ Hence, the
first condition of Theorem \ref{exstp3} is satisfied.\\
2) $\left|a\right|_3^3=\left|b\right|_3^2$ and $a^*\in\mathbb Z_3^*[1]$ yield $\left|\theta+\frac{2}{27}\right|_3^2=27$, but in our case $a^*\in\mathbb Z_3^*[2]$. Hence, the second condition of Theorem \ref{exstp3} does not hold in our case.\\
3) $\left|a\right|_3^3<\left|b\right|_3^2$, $3|{\rm ord}_3 b$ yield $\left|\theta+\frac{2}{27}\right|_3^2>27$, $3| {\rm ord}_3\left(\theta+\frac{2}{27}\right)$ respectively.\\
i) $\left|\frac{a}{3}\right|_3^3=\left|b\right|_3^2$, $(a^*,b^*)\in\mathbb Z_3^*[1|1,1]\cup\mathbb Z_3^*[1|2,1]\cup\Delta$ yield $|\theta+\frac{2}{27}|_3=27$,$(a^*,b^*)\in\mathbb Z_3^*[1|1,1]\cup\mathbb Z_3^*[1|2,1]\cup\Delta$. But $(a^*,b^*)\not\in\mathbb Z_3^*[1|1,1]\cup\mathbb Z_3^*[1|2,1]\cup\Delta$. Hence, this condition of Theorem \ref{exstp3} does not hold in our case.\\
ii) $\left|\frac{a}{3}\right|_3^3<\left|b\right|_3^2$, $b^*\in\mathbb Z_3^*[1,0]\cup\mathbb Z_3^*[2,2]$ yield $\left|\theta+\frac{2}{27}\right|_3>27$, $b^*\in\mathbb Z_3^*[1,0]\cup\mathbb Z_3^*[2,2]$. Hence, this condition of Theorem \ref{exstp3} is satisfied.
\end{proof}

\begin{thm}\label{unique} Let $p=3$. If one of the conditions of Lemma \ref{exsolp3} is satisfied, then the operator $F$ has a unique fixed point.

Otherwise, there is no fixed point of the operator $F$.
\end{thm}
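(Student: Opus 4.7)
The plan is to apply Theorem \ref{Nsolp3} to the cubic \eqref{eqt} with $a=-\frac{1}{3}$ and $b=\theta+\frac{2}{27}$, and then to cross-reference the resulting solution count with the existence criterion already established in Lemma \ref{exsolp3}. The relevant data about $a$, computed just above the statement, is ${\rm ord}_3 a=-1$, $|a|_3=3$, and $a^{*}=2+2\cdot 3+2\cdot 3^{2}+\cdots\in\mathbb Z_{3}^{*}[2,2,\dots,2]$; this is the engine of the whole argument.

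The first and crucial step is to exclude the possibility $|{\rm Fix}(F)|=3$. Scanning the list of branches in Theorem \ref{Nsolp3}, the only branch producing three roots of \eqref{gen} requires $2\mid{\rm ord}_3 a$ (combined with $a^{*}\in\mathbb Z_{3}^{*}[2]$ and $|a|_{3}^{3}>|b|_{3}^{2}$). Since ${\rm ord}_3 a=-1$ is odd, this branch is vacuous, so for $p=3$ the operator $F$ has at most one fixed point.

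It remains to match the $N_{2}=1$ branches of Theorem \ref{Nsolp3} against the two conditions of Lemma \ref{exsolp3}. The branch ``$|a|_{3}^{3}>|b|_{3}^{2}$, $2\nmid{\rm ord}_3 a$'' translates, via $|a|_{3}=3$, to $|\theta+\frac{2}{27}|_{3}^{2}<27$, which is item (1) of Lemma \ref{exsolp3}; the branch ``$|a|_{3}^{3}<|b|_{3}^{2}$, $3\mid{\rm ord}_{3}b$, $|a/3|_{3}^{3}<|b|_{3}^{2}$, $b^{*}\in\mathbb Z_{3}^{*}[1,0]\cup\mathbb Z_{3}^{*}[2,2]$'' translates to item (2). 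The remaining $N_{2}=1$ branches demand either $a^{*}\notin\mathbb Z_{3}^{*}[2]$, or $a^{*}\in\mathbb Z_{3}^{*}[1]$, or $(a^{*},b^{*})\in\mathbb Z_{3}^{*}[1|1,1]\cup\mathbb Z_{3}^{*}[1|2,1]\cup\Delta$; these are ruled out by the explicit form of $a^{*}$ in exactly the same way already carried out in the proof of Lemma \ref{exsolp3}. Combining this with the first step gives $|{\rm Fix}(F)|=1$ under either hypothesis of Lemma \ref{exsolp3} and $|{\rm Fix}(F)|=0$ otherwise, because any root $t$ of \eqref{eqt} produces a unique $z=t-\frac{2}{3}\neq -1$ and hence a unique fixed point $x_{n}=\lambda_{n}/(1+z)^{2}$. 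The main obstacle is purely bookkeeping: one must run cleanly through every branch of Theorem \ref{Nsolp3}; the single conceptual observation that prevents multiplicity three is the parity of ${\rm ord}_3 a$.
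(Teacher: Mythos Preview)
Your proposal is correct and follows essentially the same route as the paper: apply Theorem \ref{Nsolp3} to \eqref{eqt} and use the concrete data $|a|_{3}=3$, ${\rm ord}_{3}a=-1$, $a^{*}\in\mathbb Z_{3}^{*}[2,2,\dots]$ to see that only the two $N_{2}=1$ branches survive, which are exactly the two cases of Lemma \ref{exsolp3}. Your write-up is in fact more explicit than the paper's, since you isolate the key observation (that $2\nmid{\rm ord}_{3}a$ kills the three-root branch) and you spell out the passage from a root $t$ back to a fixed sequence $x_{n}=\lambda_{n}/(1+z)^{2}$; the only small overreach is the parenthetical ``$\neq -1$'', which holds under the hypotheses of Lemma \ref{exsolp3} (since $t=-\tfrac{1}{3}$ forces $\theta=0$, and $\theta=0$ lies in neither case) but is not true for an arbitrary root of \eqref{eqt}.
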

\begin{proof} Note that the number of fixed points of the operator $F$ is equal to the number of solutions of the equation (\ref{eqt}).
We use Theorem \ref{Nsolp3} to find the number of solutions of the equation (\ref{eqt}).

First, we have all the conditions for the existence of a fixed point of the operator $F$ from Lemma \ref{exsolp3}.
Theorem \ref{Nsolp3} considers six cases, for the operator $F$ there are only two of these cases, i.e.,
only in the cases $|a|_3^3>|b|_3^2, \, 2\nmid {\rm ord}_3a$ and $|a|_3^3<|b|_3^2, \,  3| {\rm ord}_3b,$ $\left|\frac{a}{3}\right|_3^3<|b|_3^2, \ \ b^*\in\mathbb Z_3^*[1,0]\cup\mathbb Z_3^*[2,2]$ there is a fixed point of the operator $F$. According to Theorem \ref{Nsolp3},
the solution in these conditions is unique. It follows that if the operator $F$ has a fixed point, it will be unique.
\end{proof}

\subsection{$2$-Periodic Points.}\label{s32}
We now deal with the classification of the set of $2$-periodic points of the operator $F$.
Denote 
$${\rm Per}_2\{F\}=\{x\in c_0| \ \ FFx=x\}\setminus {\rm Fix}\{F\}.$$

By (\ref{o1}), the equation $FFx=x$ has the following form

\begin{equation}\label{subs}
x_n=\frac{\lambda_n}{\left(1+\sum_{j\in \mathbb N} \frac{\lambda_j}{\left(1+\sum_{i\in \mathbb N} x_i\right)^2}\right)^2}, \, \lambda=\{\lambda_n\}_{n=1}^{\infty}, \, x=\{x_n\}_{n=1}^{\infty}\in c_0.
\end{equation}

Let $\theta=\sum_{j\in \mathbb N} \lambda_i$ and $z=\sum_{j\in \mathbb N} x_j$.
Summarizing both hand sides of \eqref{subs} we get
\begin{equation}\label{eqper}
z=\frac{\theta \left(1+z\right)^{4}}{\left(\left(1+z\right)^2+\theta \right)^2}.
\end{equation}

Let $f(z)=\frac{\theta}{\left(1+z\right)^2}$ then $f\left(f(z)\right)=\frac{\theta \left(1+z\right)^{4}}{\left(\left(1+z\right)^2+\theta \right)^2}.$
In order to find $2$-periodic points (different from the fixed points) of $f(z)$ we consider the following equation:
\begin{equation}\label{eqper2}
\frac{f\left(f(z)\right)-z}{f\left(z\right)-z}=0.
\end{equation}

From the \eqref{eqper2} we get
\begin{equation}\label{eqperk=2}
z^2+(2-\theta)z+1=0.
\end{equation}
The solutions of the equation \eqref{eqperk=2} are
\begin{equation}\label{solperk=2}
z_{1,2}=\frac{\theta-2\pm\sqrt{\theta^2-4\theta}}{2}.
\end{equation}

Substituting \eqref{solperk=2} into \eqref{fix1} we have
\begin{equation}\label{pernottrinv}
\theta \sqrt{\theta^2-4\theta}\left(\sqrt{\theta^2-4\theta}\pm (\theta-2)\right)=0.
\end{equation}

It is clear that if $\theta\neq0$ and $\theta\neq4$ then the solutions of the equation \eqref{eqperk=2} are not fixed points of $f(z)$.

If $\theta=0$ the solution \eqref{solperk=2} gives $z=-1$, however, we assumed that $z\neq-1$ (see \eqref{fix}).

If $\theta=4$ the solution \eqref{solperk=2} gives $z=1$. In this case $z=1$ is fixed point of $f(z)$.

Thus, we can assume that $\theta\neq0$ and $\theta\neq4$.
Let $\theta=p^{\gamma(\theta)}(\theta_0+\theta_1p+\theta_2p^2+...), \, \theta_0\neq0$ and $D(\theta)=\theta^2-4\theta$.

\begin{thm}\label{NPer2} Let $p\geq3$ be a prime and $|{\rm Per}_2\{F\}|$ be the number of $2$-periodic points of the operator $F$. Then the following statements hold:
\begin{equation}\label{no}
|{\rm Per}_2\{F\}|=\left\{
\begin{array}{lllllll}
2, \mbox ~{if}~ \left|\theta\right|_p>1;\\[2mm]
2, \mbox ~{if}~ \left|\theta\right|_p<1, 2\mid {\rm ord}_p \theta, \left(\frac{-\theta_0}{p}\right)=1;\\[2mm]
0, \mbox ~{if}~ \left|\theta\right|_p<1, 2\nmid {\rm ord}_p \theta \mbox ~{or}~\left(\frac{-\theta_0}{p}\right)=-1;\\[2mm]
2, \mbox ~{if}~ \left|\theta\right|_p=1, \left(\frac{\theta_0^2-4\theta_0}{p}\right)=1;\\[2mm]
0, \mbox ~{if}~ \left|\theta\right|_p=1, \left(\frac{\theta_0^2-4\theta_0}{p}\right)=-1;\\[2mm]
2, \mbox ~{if}~ \left|\theta\right|_p=1, 2\mid {\rm ord}_p (\theta-4), \left(\frac{|\theta-4|_p(\theta-4)(\operatorname{mod }p)}{p}\right)=1;\\[2mm]
0, \mbox ~{if}~ \left|\theta\right|_p=1, 2\nmid {\rm ord}_p (\theta-4)  \mbox ~{or}~ \left(\frac{|\theta-4|_p(\theta-4)(\operatorname{mod }p)}{p}\right)=-1;\\[2mm]
\end{array}\right.
\end{equation}
\end{thm}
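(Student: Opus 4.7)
The plan is to reduce the count of 2-periodic points to the count of $\mathbb{Q}_p$-roots of \eqref{eqperk=2}, and then apply Lemma \ref{lemmax2=a} to its discriminant $D(\theta)=\theta^2-4\theta=\theta(\theta-4)$, splitting into cases according to $|\theta|_p$ (and, when $|\theta|_p=1$, according to $|\theta-4|_p$). The authors have already shown that each solution $z\in \mathbb{Q}_p\setminus\{-1\}$ of \eqref{eqperk=2} produces exactly one genuine 2-periodic sequence via $x_n=\lambda_n/\bigl(1+\theta/(1+z)^2\bigr)^2$, and under the standing assumption $\theta\neq 0,4$ no such $z$ is a fixed point of $f$. (One can check $z\neq -1$ automatically from ultrametric estimates.) Since the quadratic \eqref{eqperk=2} has discriminant $D(\theta)\neq 0$ when $\theta\neq 0,4$, the number of roots is $2$ if $D(\theta)$ is a square in $\mathbb{Q}_p$ and $0$ otherwise — which explains why only the answers $0$ and $2$ appear.

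Next I would apply Lemma \ref{lemmax2=a}: $D(\theta)$ is a square iff $\mathrm{ord}_p D(\theta)$ is even and the leading $p$-adic digit of the normalised unit $|D(\theta)|_p\cdot D(\theta)$ is a quadratic residue mod $p$. The verification is then a case split:

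\textbf{Case $|\theta|_p>1$.} Since $p\geq 3$ gives $|4|_p=1$, the strong triangle inequality yields $|D(\theta)|_p=|\theta^2|_p=|\theta|_p^2$, so $\mathrm{ord}_p D(\theta)=2\mathrm{ord}_p\theta$ is automatically even and the leading digit is $\theta_0^2\pmod p$, a square. Hence $|{\rm Per}_2\{F\}|=2$.

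\textbf{Case $|\theta|_p<1$.} Here $|\theta^2|_p<|\theta|_p=|4\theta|_p$, so $|D(\theta)|_p=|\theta|_p$ with leading digit $-4\theta_0\pmod p$. Solvability requires $\mathrm{ord}_p\theta$ even; since $\left(\frac{4}{p}\right)=1$, the QR condition collapses to $\left(\frac{-\theta_0}{p}\right)=1$.

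\textbf{Case $|\theta|_p=1$.} If $|\theta-4|_p=1$, then $|D(\theta)|_p=1$ with leading digit $\theta_0^2-4\theta_0\pmod p$, which gives the two items with $\left(\frac{\theta_0^2-4\theta_0}{p}\right)=\pm 1$. If instead $|\theta-4|_p<1$ (i.e.\ $\theta_0\equiv 4\pmod p$), then the factorisation $D(\theta)=\theta(\theta-4)$ gives $\mathrm{ord}_p D(\theta)=\mathrm{ord}_p(\theta-4)$ and leading digit $\theta_0\cdot\bigl(|\theta-4|_p(\theta-4)\bigr)\bmod p\equiv 4\cdot |\theta-4|_p(\theta-4)\bmod p$; since $4$ is a square, solvability reduces to $2\mid\mathrm{ord}_p(\theta-4)$ and $\left(\frac{|\theta-4|_p(\theta-4)\bmod p}{p}\right)=1$.

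The step I expect to be the trickiest is the sub-case $|\theta|_p=1$ with $|\theta-4|_p<1$: here the cancellation in $\theta^2-4\theta$ forces one to peel off the $\theta$-factor before reading the leading digit, and one must verify that the factor $\theta_0\equiv 4\pmod p$ is always a square modulo $p$ (which it is, being $2^2$) so that the Legendre symbol criterion depends only on $\theta-4$. Everything else is routine bookkeeping that matches the four solvability conditions to the seven enumerated lines of \eqref{no}.
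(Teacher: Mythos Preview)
Your proposal is correct and follows essentially the same approach as the paper: both reduce to checking when the discriminant $D(\theta)=\theta^2-4\theta$ is a square in $\mathbb{Q}_p$ via Lemma~\ref{lemmax2=a}, splitting into the cases $|\theta|_p>1$, $|\theta|_p<1$, and $|\theta|_p=1$ (with the sub-split $\theta_0\neq 4$ versus $\theta_0=4$). The only cosmetic difference is that in the sub-case $\theta_0=4$ the paper expands $\theta=4+\theta_sp^s+o[p^s]$ directly to obtain $D(\theta)=p^s(4\theta_s+o[1])$, whereas you use the factorisation $D(\theta)=\theta(\theta-4)$ and the observation $\theta_0\equiv 4\equiv 2^2\pmod p$ to reach the same Legendre-symbol criterion.
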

\begin{proof}
Let $\left|\theta\right|_p>1$, i.e. $\gamma(\theta)<0.$  Then we rewrite $D(\theta)=p^{2\gamma(\theta)}\left(\theta_0^2+o[1]\right).$
Due to the Lemma \ref{lemmax2=a} the equation \eqref{eqperk=2} has two solutions.

Let $\left|\theta\right|_p<1$, i.e. $\gamma(\theta)>0.$ Then we can rewrite $D(\theta)=p^{\gamma(\theta)}\left(-4\theta_0+o[1]\right).$
Then, again thanks to the Lemma \ref{lemmax2=a} the equation \eqref{eqperk=2} has two solutions iff $\gamma(\theta)$ is even (it means that $2\mid {\rm ord}_p \theta$) and the congruence $x^2\equiv-\theta_0 (\operatorname{mod }p)$ is solvable (It means that $\left(\frac{-\theta_0}{p}\right)=1$).

Let $\left|\theta\right|_p=1$, i.e. $\gamma(\theta)<0.$ Then $D(\theta)=\theta_0^2-4\theta_0+o[1]$, here $\theta_0\neq0$.
So, we have the following two cases:

Let $\theta_0\neq4$. It follows that the equation \eqref{eqperk=2} has two solutions iff the congruence $x^2\equiv\theta_0^2-4\theta_0 (\operatorname{mod }p)$ is solvable.

Let $\theta_0=4$. Since $\theta\neq4$, then there exists $s\in \mathbb N$ such that $\theta=4+\theta_sp^s+o[p^s]$. It yields that $D(\theta)=p^s\left(4\theta_s+o[1]\right)$. According to the Lemma \ref{lemmax2=a}, the equation \eqref{eqperk=2} has two solutions if and only if $s$ is even and the congruence $x^2\equiv\theta_s (\operatorname{mod }p)$ is solvable. It means that $\left(\frac{|\theta-4|_p(\theta-4)(\operatorname{mod }p)}{p}\right)=1$.
\end{proof}

\begin{thm}\label{Tp2} Let $p=2$ and $|{\rm Per}_2(F)|$ is number of $2$-periodic points of the operator $F$. Then the following statements hold:
\begin{equation}\label{no2}
|{\rm Per}_2(F)|=\left\{
\begin{array}{llllllllllllllll}
2, \mbox ~{if}~ |\theta|_2>1;\\[2mm]
2, \mbox ~{if}~ |\theta|_2=1 \mbox~{and}~ \theta_2=1;\\[2mm]
0, \mbox ~{if}~ |\theta|_2=1 \mbox~{and}~ \theta_2=0;\\[2mm]
0, \mbox ~{if}~ |\theta|_2\leq\frac{1}{32}, 2\nmid {\rm ord}_p \theta;\\[2mm]
2, \mbox ~{if}~ |\theta|_2\leq\frac{1}{32},2\mid {\rm ord}_p \theta, \theta_1=\theta_2=0;\\[2mm]
0, \mbox ~{if}~ |\theta|_2\leq\frac{1}{32},2\mid {\rm ord}_p \theta, \theta_1=1 \mbox ~{or}~\theta_2=1;\\[2mm]
2, \mbox ~{if}~ |\theta|_2=\frac{1}{2} \mbox~{and}~\theta_1=0;\\[2mm]
0, \mbox ~{if}~ |\theta|_2=\frac{1}{2} \mbox~{and}~\theta_1=1;\\[2mm]
0, \mbox ~{if}~ |\theta|_2=\frac{1}{4} \mbox~{and}~ \theta_1=1;\\[2mm]
2, \mbox ~{if}~ |\theta|_2=\frac{1}{4} \mbox~{and}~ \theta_1=0,\theta_2=\theta_3=1,\theta_4=0;\\[2mm]
0, \mbox ~{if}~ |\theta|_2=\frac{1}{4} \mbox~{and}~ (\theta_1-1)\theta_2\theta_3(\theta_4-1)=0;\\[2mm]
0, \mbox ~{if}~ |\theta|_2=\frac{1}{8};\\[2mm]
2, \mbox ~{if}~ |\theta|_2=\frac{1}{16} \mbox~{and}~ \theta_1=0,\theta_2=1;\\[2mm]
0, \mbox ~{if}~ |\theta|_2=\frac{1}{16} \mbox~{and}~ \theta_1=1 \mbox~or~\theta_2=0.
\end{array}\right.
\end{equation}
\end{thm}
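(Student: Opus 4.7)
The plan is to proceed exactly as in the proof of Theorem \ref{NPer2}: since equation \eqref{eqperk=2} is quadratic in $z$ with discriminant $D(\theta)=\theta^2-4\theta$, the count $|{\rm Per}_2(F)|$ equals the number of square roots of $D(\theta)$ in $\mathbb Q_2$ (the values $\theta=0,4$ are already excluded, and, as argued before Theorem \ref{NPer2}, for other $\theta$ the two roots never coincide with a fixed point of $F$). By Lemma \ref{lemmax2=a} specialized to $p=2$, this count is $2$ precisely when $\gamma(D)$ is even \emph{and} the unit part $D/2^{\gamma(D)}$ satisfies $d_1=d_2=0$ (equivalently $D/2^{\gamma(D)}\equiv 1\pmod 8$), and is $0$ otherwise. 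The extra ``$d_1=d_2=0$'' clause, absent when $p>3$, forces us to compute $D(\theta)$ accurately modulo $2^{\gamma(D)+3}$ in each case, which accounts for the much longer list in \eqref{no2} compared with \eqref{no}.

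I would then organize the case analysis by the size of $|\theta|_2$, which controls whether the dominant contribution to $D=\theta(\theta-4)$ comes from $\theta^2$ or from $-4\theta$. Write $\theta=2^{\gamma(\theta)}\theta^*$ with $\theta^*=1+2\theta_1+4\theta_2+\cdots$. The two boundary regimes are immediate: if $|\theta|_2>1$ then $D=\theta^2(1-4/\theta)$ with $|4/\theta|_2\le 1/8$, so $1-4/\theta\equiv 1\pmod 8$ and $D$ is automatically a square; if $|\theta|_2\le 1/32$ then $D=-4\theta(1-\theta/4)$ with $|\theta/4|_2\le 1/8$, so $D$ is a square iff $-4\theta$ is, which by Lemma \ref{lemmax2=a} reduces to the parity of $\gamma(\theta)$ together with the residue $-\theta^*\pmod 8$.

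For the intermediate values $|\theta|_2\in\{1,\,1/2,\,1/4,\,1/8,\,1/16\}$ I would substitute the corresponding form of $\theta$ into $D(\theta)$, expand to the required precision, determine $\gamma(D)$ and the digits $d_1,d_2$ of its unit part in terms of $\theta_1,\theta_2,\ldots$, and invoke Lemma \ref{lemmax2=a}. Four of these cases are short because at most one of $\theta^2$ and $4\theta$ is dominant: for $|\theta|_2=1$ one has $|D|_2=1$ and $D/2^{\gamma(D)}$ is controlled by $\theta_1,\theta_2$; for $|\theta|_2\in\{1/2,\,1/16\}$ the $\theta^2$-term dominates and only the first one or two nontrivial digits of $\theta^*$ are needed; and for $|\theta|_2=1/8$ the resulting $\gamma(D)$ is odd, forcing $0$ solutions unconditionally.

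The main obstacle is the case $|\theta|_2=1/4$, where $\gamma(\theta^2)=\gamma(4\theta)=4$ coincide and the two leading terms cancel to a variable depth. Factoring $D=16\,\theta^*(\theta^*-1)$ and using that $\theta^*-1=2\theta_1+4\theta_2+8\theta_3+16\theta_4+o[16]$ has $2$-adic valuation equal to the index of the first nonzero $\theta_i$ ($i\ge 1$), one sees $\gamma(D)$ takes values in $\{5,6,7,8,\ldots\}$ depending on which of $\theta_1,\theta_2,\theta_3,\theta_4$ vanish; each value of $\gamma(D)$ then imposes a further constraint $d_1=d_2=0$ on the next two digits of $D/2^{\gamma(D)}$. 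Careful bookkeeping in this small decision tree produces the compound digit-conditions listed in the statement, after which all remaining branches fall out of routine $2$-adic expansions.
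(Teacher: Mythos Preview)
Your approach is essentially the same as the paper's: a case split on $|\theta|_2$, computation of the $2$-adic expansion of $D(\theta)=\theta^2-4\theta$ in each case, and application of Lemma \ref{lemmax2=a} (parity of $\gamma(D)$ together with $d_1=d_2=0$). The paper proceeds through the identical list $|\theta|_2\in\{>1,\,1,\,\le 1/32,\,1/2,\,1/4,\,1/8,\,1/16\}$ and writes out the expansion of $D(\theta)$ explicitly in each. One small slip in your summary: for $|\theta|_2=1/16$ it is the $-4\theta$ term that dominates, not $\theta^2$ (since $|4\theta|_2=1/64>|{\theta^2}|_2=1/256$); this is harmless, as the actual expansion immediately reveals it, and your overall plan is correct and matches the paper.
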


\begin{proof}
 Let $|\theta|_2>1.$ Then $D(\theta)=\theta^2-4\theta=\theta^2(1-\frac{4}{\theta})$. We have $\left|\frac{4}{\theta}\right|_2\leq\frac{1}{8}$. Thus, by Lemma \ref{lemmax2=a} the equation \eqref{eqperk=2} has two solutions.

Let $|\theta|_2=1.$ We have $\gamma(\theta)=0$. Then $D(\theta)=1+(\theta_1^2+\theta_1+\theta_2-1)2^2+o[4]$. $\theta_1^2+\theta_1\equiv0(\operatorname{mod }2)$. Thanks to Lemma \ref{lemmax2=a} the equation \eqref{eqperk=2} has two solutions if $\theta_2=1$, the equation \eqref{eqperk=2} has no solution if $\theta_2=0$.

Let $|\theta|_2\leq\frac{1}{32}$. It means $\gamma(\theta)\geq5$. Then $D(\theta)=2^{\gamma(\theta)+2}\left(-1-\theta_12-\theta_22^2-\theta_32^3+o[8]\right)$. By  Lemma \ref{lemmax2=a} the equation \eqref{eqperk=2} has two solutions iff $\theta_1=\theta_2=0$ and $\gamma(\theta)$ is even.

Let $|\theta|_2=\frac{1}{2}$, i.e. $\gamma(\theta)=1$. Then $D(\theta)=2^{2}\left(-1-\theta_1^22^2+\theta_22^4+o[2^4]\right)$.By Lemma \ref{lemmax2=a} the equation \eqref{eqperk=2} has two solutions iff $\theta_1=0.$

Let $|\theta|_2=\frac{1}{4}$, i.e. $\gamma(\theta)=2$. Then \\ $D(\theta)=2^{4}\left(-\theta_12+(\theta_1^2+\theta_1-\theta_2)2^2+(\theta_2-\theta_3)2^3+(\theta_3+\theta_1\theta_2+\theta_2^2-\theta_4)2^4+o[2^4]\right)$. The equation \eqref{eqperk=2} has two solutions iff $\theta_1=0,\theta_2=1,\theta_3=1,\theta_4=0.$

Let $|\theta|_2=\frac{1}{8}$, i.e. $\gamma(\theta)=3$. Then $D(\theta)=2^{5}\left(1-\theta_12-\theta_22^2+o[2^2]\right)$.

 Again by Lemma \ref{lemmax2=a} the equation \eqref{eqperk=2} has no solution.

Let $|\theta|_2=\frac{1}{16}$, i.e. $\gamma(\theta)=4$. Then $D(\theta)=2^{6}\left(-1-\theta_12+(1-\theta_2)2^2+o[2^2]\right)$. The equation \eqref{eqperk=2} has two solutions iff $\theta_1=0,\theta_2=1.$
\end{proof}

\section{Application to the theory of Gibbs measures}

In this section we give $p$-adic Gibbs measures corresponding to fixed and periodic points of operator $F$.

\subsection{Definitions and equations}

The Cayley tree $\Im^{k}=(V, L)$ of order $k \geq 1$ is an infinite tree, i.e. graph without cycles, each vertex of which has exactly $k+1$ edges (see Chapter 1 in \cite{5}). Here $V$ is the set of vertices of $\Im^{k}$ va $L$ is the set of its edges. For $l \in L$ its endpoints $x, y \in V$ are called nearest neighbor vertices and denoted by $l=\langle x, y\rangle$.

The distance  $d(x, y)$ between the vertices $x$ and $y$ on the Cayley tree, is number of edges of the shortest path connecting vertices $x$ and $y$.

For a fixed $x^{0} \in V$ we put
$$
W_{n}=\left\{x \in V \mid d\left(x, x^{0}\right)=n\right\}, \quad V_{n}=\bigcup_{m=0} ^{n} W_{m}. $$

 The set $S(x)$ of direct successors of the vertex $x$ is defined as follows. If $x \in W_{n}$ then
$$
S(x)=\left\{y_{i} \in W_{n+1} \mid d\left(x, y_{i}\right)=1, i=1,2, \ldots, k \right\}.
$$

 We consider the hard core (HC) model of nearest neighbors with a countable number of states from $\mathbb N_0=\{0,1,2,\dots\}$ on the Cayley tree. The configuration $\sigma=\{\sigma(x) \mid x \in V\}$ on the Cayley tree is given as a function from $V$ to the set $\mathbb {N}_0$. For a subset $A\subset \mathbb N_0$ we denote by $\Omega_A$ the set of all configurations defined on $A$.

The activity set (see \cite{16}) on the set of states $\mathbb N_0$ is a bounded function $\lambda: \mathbb N_0 \mapsto \mathbb {N}_0$. 

Now we give definition of $p$-Adic Gibbs measure.
Let $\textbf {z}:x\longrightarrow z_x=(z_{1,x}, z_{2,x}, ...)\in \mathbb Q_p ^{\mathbb N_0}$ be vector-valued function on $V$, where $z_{i,x}\in\mathbb Q_p$, $i\in \mathbb N_0$.

Let $\nu=\{\nu(i)\in\mathbb Q_p, i\in\mathbb N_0\}$ be a fixed $p$-adic probability measure \cite{Kh}. We construct \emph{generalized probability Gibbs distribution} $\mu^{(n)}$ (as a HC-model) on $\Omega_{V_n}$ for any $n\in\mathbb N$ and $\lambda_{\sigma(x)}$ as the following
\begin{equation}\label{mu}
\mu^{(n)}(\sigma_n)=
\frac{1}{Z_n}\prod_{x\in V_n}\lambda_{\sigma_n(x)}\nu(\sigma_n(x)) \prod_{x\in W_n}\textbf {z}_{\sigma_n(x),x},\
\end{equation}
Here $Z_{n}$ is
the corresponding normalizing factor or {\it partition
function} given by
\begin{equation}\label{ZN1}
Z_{n}=\sum_{\sigma_n\in\Omega_{V_n}}\prod_{x\in V_n}\lambda_{\sigma_n(x)}\nu(\sigma_n(x)) \prod_{x\in W_n}\textbf {z}_{\sigma_n(x),x}.
\end{equation}

\begin{defn}\label{pGM} Generalized probability Gibbs distribution is called $p$-adic \emph{Gibbs distribution} if (see \cite{GRR}, \cite{MO})
$$\lambda_i, \textbf {z}_{i,x}\in\mathcal E_p:=\left\{a\in\mathbb Q_p: |a-1|_p<p^{-1/(p-1)}\right\}, \ \ \mbox{for any} \ \ i\in\mathbb N_0, \, x\in V.$$
\end{defn}

By $p$-adic version of Kolmogorov's theorem on extension of measures (see \cite{GMR}) one can show that if measures (\ref{mu}) are consistent (compatible) then there is unique $p$-adic Gibbs measure, which coincides with $\mu^{(n)}$ on the cylinders with base $\sigma_n$. 

  A configuration $\sigma$ defined on $V$ is called \textit{admissible} if $\sigma (x) \sigma (y) = 0 $ for any neighbor $ \langle x, y \rangle $ in $ V $, i.e. if the vertex $ x $ has a spin value $ \sigma (x) = 0 $, then on the neighbor vertices we can put any value from $ \mathbb {N}_0 $, if on the vertex $ x $ there is any value from $ \mathbb{ N}_0 $, then we put only zeros on the neighbor vertices.

Here we consider $p$-adic Gibbs measures on the set of admissible configurations. By similar arguments of the real case (see proof of Theorem 2.1 in \cite{BR}) one can show that 
the measure $\mu^{(n)}, \ n=1,2,...$ associated with \eqref{mu}, on the set of admissible configurations, satisfy compatibility condition if and only if for any $x\in V\setminus\{x^0\}$ the following equation holds:
\begin{equation}\label{comp1} \textbf {z}_{i,x}=\lambda_i \prod_{y \in S(x)} \frac{1}{1+\sum_{j\in \mathbb {N}_0} \textbf {z}_{j,y}}, \ \ {i}\in \mathbb {N}_0.
\end{equation}
here $\lambda_i$ is redefined as function of previous $\lambda_i$ and $\nu(i)$. Assume that $1+\sum_{j\in \mathbb {N}_0} \textbf {z}_{j,y}\neq 0$ for any $y\in S(x)$.

Finding the general form of solutions to the equation (\ref{comp1}) seems to be a very difficult task.

Let we consider following series $\sum_{j\in \mathbb N_0} \lambda_j$ and  $\sum_{j\in \mathbb N_0} \textbf{z}_{j,y},$ $y\in V$. If these series are not convergent, then the above equation does not make sense.

Recall that a $p$-adic series converges if and only if its terms have zero limit. Also, if the $p$-adic series is convergent, its sum does not depend on the numbering of terms (see page 75 of \cite{Ka}).

\begin{thm}\label{existence}
	There does not exist a $p$-adic Gibbs distribution on the set of admissible configurations (with finite-volume distributions defined as \eqref{mu}).
\end{thm}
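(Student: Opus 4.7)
The plan is to derive a contradiction from the assumption that a $p$-adic Gibbs distribution (in the sense of Definition \ref{pGM}) exists, by showing that the regularity requirement $\lambda_i,\mathbf{z}_{i,x}\in\mathcal{E}_p$ is fundamentally incompatible with the convergence of the infinite sums that appear in the consistency equation \eqref{comp1} (and equivalently in the partition function $Z_n$).

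First I would unpack what $\mathcal{E}_p$ forces on the $p$-adic norm. If $a\in\mathcal{E}_p$, then $|a-1|_p<p^{-1/(p-1)}\le 1$, so by the ``if $|x|_p\ne|y|_p$ then $|x+y|_p=\max\{|x|_p,|y|_p\}$'' property of the non-Archimedean norm (applied to $x=1$ and $y=a-1$), one gets $|a|_p=\max\{1,|a-1|_p\}=1$. Applying this to the hypothetical $p$-adic Gibbs distribution yields $|\lambda_i|_p=1$ for every $i\in\mathbb{N}_0$ and $|\mathbf{z}_{i,x}|_p=1$ for every $i\in\mathbb{N}_0$ and every $x\in V$.

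Next I would invoke the $p$-adic convergence criterion recalled just before the theorem: a series $\sum_{n} a_n$ in $\mathbb{Q}_p$ converges if and only if $|a_n|_p\to 0$. The consistency equation \eqref{comp1} contains the denominator $1+\sum_{j\in\mathbb{N}_0}\mathbf{z}_{j,y}$, whose very meaning requires the series $\sum_{j\in\mathbb{N}_0}\mathbf{z}_{j,y}$ to converge; equivalently, the normalization $Z_n$ in \eqref{ZN1} involves summing $\prod_{x\in V_n}\lambda_{\sigma_n(x)}\nu(\sigma_n(x))\prod_{x\in W_n}\mathbf{z}_{\sigma_n(x),x}$ over the infinite set of admissible configurations, and in particular the single-site marginal forces $\lambda_j\nu(j)\mathbf{z}_{j,x}\to 0$. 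In both formulations, convergence demands that $|\mathbf{z}_{j,y}|_p\to 0$ (or at least $|\lambda_j|_p\to 0$) as $j\to\infty$. This directly contradicts the equalities $|\lambda_j|_p=|\mathbf{z}_{j,y}|_p=1$ established in the previous step, completing the proof.

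The only point requiring any care is making it explicit that the very act of writing \eqref{comp1} and \eqref{mu} presupposes convergence of the relevant $p$-adic series, so that one is not free to sidestep the obstruction by interpreting the formulas formally; once this is granted the argument is essentially one line. I do not anticipate a genuine obstacle: the contradiction is structural (an infinite set of ``close-to-1'' parameters can never tend to $0$), and the core content of the theorem is that the requirement ``$\lambda_i\in\mathcal{E}_p$ for all $i\in\mathbb{N}_0$'' is incompatible with the countability of the spin space, which is exactly why only the weaker notion of \emph{generalized} Gibbs measure (without the $\mathcal{E}_p$ restriction) will be studied in the sequel.
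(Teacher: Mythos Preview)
Your proposal is correct and follows essentially the same approach as the paper: both arguments derive a contradiction from the fact that membership in $\mathcal{E}_p$ forces $|\textbf{z}_{i,x}|_p=1$ (equivalently, $|\textbf{z}_{i,x}-1|_p=1$ whenever $|\textbf{z}_{i,x}|_p<1$), while convergence of the series $\sum_j \textbf{z}_{j,y}$ in \eqref{comp1} forces $|\textbf{z}_{i,x}|_p\to 0$. The only cosmetic difference is the order of the two steps; your extra remarks about $\lambda_i$ and $Z_n$ are not needed but do no harm.
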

\begin{proof}
	It can be seen that there is no any solution  $\textbf {z}_{i,x}\in\mathcal E_p$  to the equation (\ref{comp1}). Indeed, assume a solution $\textbf {z}_{i,x}\in\mathcal E_p$ exists. Then by above mentioned remark we should have 
	$$\lim_{i\to \infty}\textbf {z}_{i,x}=0, \ \ \mbox{for all} \ \ x\in V.$$
	That is for any $\varepsilon>0$ there is $i_0\in \mathbb N$ such that 
	$$|\textbf {z}_{i,x}|_p<\varepsilon, \ \ i\geq i_0.$$
	Take now $\varepsilon<1$ then by the property 1) of the $p$-adic norm (see Section 2.1) we get
	$$|\textbf {z}_{i,x}-1|_p=1>p^{-1/(p-1)},$$ that is  a contradiction to the assumption that $\textbf {z}_{i,x}\in\mathcal E_p$.
\end{proof}
\begin{rk}\label{rkmr}
	In \cite{KMR} for the real-valued case of the HC-model it is proven that there is unique TIGM. But Theorem \ref{existence} shows that $p$-adic version of the model is different from the real one. 
\end{rk}
Now, we investigate generalized Gibbs measures (GGMs). 
 Consider translation-invariant solutions, i.e., $\textbf{z}_x=\textbf{z}\in \mathbb Q_p^{\mathbb N_0}$, for all $x\in V$.
GGM corresponding to such a solution, is called translation-invariant $p$-adic GGM (TIGpGM) 

Similarly, one can define a periodic GGM (PGpGM) which corresponds to the periodic solution $\textbf{z}_x$, in sense of periodicity with respect to shifts on vertices of the Cayley tree (see, for example, page 78 of \cite{R99} and page 931 of \cite{Rt}). Moreover, as it was shown in \cite{RH}, only  translation-invariant and two-period Gibbs measures may exist for our HC-model.

\subsection{Translation-invariant solutions for $p\geq 3$.}

To classify the TIGpGMs it is necessary to find all translation-invariant solutions of \eqref{comp1},
i.e., $\textbf{z}_x=\textbf{z}\in \mathbb Q_p^{\mathbb N_0}.$

To do this, we rename the series terms in the equation and get \begin{equation}\label{comp.2}
\textbf{z}_i=\lambda_i\cdot \frac{1}{\left(1+\sum_{j\in \mathbb N} \textbf{z}_j\right)^k}, \ \ i\in \mathbb N.
\end{equation}

It can be seen that the equation (\ref{comp.2}) is equivalent to the equation (\ref{fix}) in the previous section.
Hence, the number of TIGpGMs for the HC model on the Cayley tree of order two
is the same as the number of fixed points of the operator $F$ considered in the previous section.
To use these fixed points we introduce, for $p>3$,  $$\theta=\sum_{n=1}^{\infty}\lambda_n, \ \ b=\theta+\frac{2}{27},$$ 
then 
$$-{1\over 3}=a_0+a_1p+a_2p^2+..., \ \ b|b|_p=b_0+b_1p+...,$$
and 
$$D_0=-4a_0^3-27b_0^3, \ \ u_1=0, u_2=-a_0, u_3=b_0,$$ $$u_{n+3}=b_0u_n-a_0u_{n+1}, \ \ \mbox{for any} \ \ n=1, 2, \dots, p-3.$$

Also, $D={4\over 27}-27b^2|b|_p^2$. If $D\neq 0$, then  $D=\frac{D^*}{|D|_p},  D^{*}\in\mathbb Z^{*}_p,$ $D^*=d_0+d_1p+... \, .$

Now as a corollary of Theorem \ref{N1} we have
\begin{thm}\label{TR1} Let $p>3$ be a prime and $N_{TIGpGM}$ is number of the translation-invariant generalized $p$-adic Gibbs measures for HC model on the Cayley tree of order two. Then the following statements hold:
\begin{equation}\label{tr1}
N_{TIGpGM}=\left\{
\begin{array}{llllllllllll}
3,\ \ \ \ |b|_p^2>1, 3|{\rm ord}_pb, p\equiv 1(\operatorname{mod }3), b_0^\frac{p-1}{3}\equiv1 (\operatorname{mod }p); \\[1mm]
3,\ \ \ \ |b|_p=1, D=0;\\[1mm]
3,\ \ \ \ |b|_p=1, 0<\left|D\right|_p<1, 2|{\rm ord}_pD,d_0^\frac{p-1}{2}\equiv1 (\operatorname{mod }p);\\[1mm]
3,\ \ \ \ |b|_p=1, \left|D\right|_p=1 \ \ \mbox {and} \ \ u_{p-2}\equiv 0 (\operatorname{mod }p); \\[1mm]
3,\ \ \ \ |b|_p<1, \left(-a_0\right)^\frac{p-1}{2}\equiv 1 (\operatorname{mod }p); \\[1mm]
1,\ \ \ \ |b|_p>1, 3|{\rm ord}_pb, p\equiv 2(mod 3); \\[1mm]
1,\ \ \ \ |b|_p=1, 0<\left|D\right|_p<1, 2|{\rm ord}_pD,d_0^\frac{p-1}{2}\not\equiv1 (\operatorname{mod }p);\\[1mm]
1,\ \ \ \ |b|_p=1, 0<\left|D\right|_p<1, 2\nmid {\rm ord}_pD;\\[1mm]
1,\ \ \ \ |b|_p=1, D_0u_{p-2}^2\not\equiv 0(\operatorname{mod }p), D_0u_{p-2}^2\not\equiv 9a_0^2(\operatorname{mod }p); \\[1mm]
1,\ \ \ \ |b|_p<1, \left(-a_0\right)^\frac{p-1}{2}\not\equiv 1 (\operatorname{mod }p); \\[1mm]
0,\ \ \ \ otherwise.
\end{array}\right.
\end{equation}
\end{thm}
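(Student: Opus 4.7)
The plan is to recognize Theorem \ref{TR1} as a direct corollary of Theorem \ref{N1}, with no new analytic input required: all the work consists of translating the classification of fixed points of the operator $F$ into the language of translation-invariant generalized $p$-adic Gibbs measures.

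First, I would specialize the compatibility equation \eqref{comp1} to translation-invariant solutions $\textbf{z}_x \equiv \textbf{z}$ for all $x \in V$. On the Cayley tree of order $k = 2$ each non-root vertex has exactly $|S(x)| = 2$ direct successors, so \eqref{comp1} reduces to \eqref{comp.2} with exponent $2$:
\begin{equation*}
\textbf{z}_i \;=\; \lambda_i \left( \frac{1}{1 + \sum_{j} \textbf{z}_j} \right)^{2}, \qquad i \in \mathbb{N}.
\end{equation*}
This is precisely the fixed-point equation \eqref{fix} of the operator $F$ defined in \eqref{o1} (the index sets $\mathbb{N}$ and $\mathbb{N}_0$ differ only by a relabeling of coordinates and produce neither spurious nor missing solutions, since all that enters the analysis is the scalar sum $\sum_j \textbf{z}_j$). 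Hence translation-invariant solutions of the compatibility condition are in bijection with elements of ${\rm Fix}(F)$, and therefore $N_{TIGpGM} = |{\rm Fix}(F)|$.

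Second, I would verify that the parameters appearing in Theorem \ref{TR1} are literally the same as those of Theorem \ref{N1}. The scalar $\theta = \sum_n \lambda_n$ and the quantity $b = \theta + \tfrac{2}{27}$ coincide in the two setups, and the definitions of $a_0$, $b_0$, $D_0$, $u_n$, and $D = \tfrac{4}{27} - 27 b^2 |b|_p^2$ given in Section 4.2 repeat verbatim those used just before Theorem \ref{N1}. Since $|b|_p = |\theta + \tfrac{2}{27}|_p$ and ${\rm ord}_p b = {\rm ord}_p(\theta + \tfrac{2}{27})$, each of the eleven clauses in the case-by-case formula of Theorem \ref{N1} translates verbatim into the corresponding clause of \eqref{tr1}, producing the claimed count.

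There is no real obstacle here: the genuine analytic content — the solvability and counting of the depressed cubic \eqref{gen} — has already been absorbed into Theorems \ref{cubeq} and \ref{Nsol}, and transported to our specific parameters in Theorem \ref{N1}. The only task left is the bookkeeping step of matching the eleven cases one-by-one and checking that the translation-invariant reduction is faithful, both of which are routine.
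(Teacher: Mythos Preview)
Your proposal is correct and matches the paper's approach exactly: the paper presents Theorem~\ref{TR1} explicitly ``as a corollary of Theorem~\ref{N1}'' with no separate proof, relying on the reduction of translation-invariant solutions of \eqref{comp1} to the fixed-point equation \eqref{fix} carried out in the preceding discussion. You have simply spelled out the bookkeeping (the identification $N_{TIGpGM}=|{\rm Fix}(F)|$ and the parameter matching $b=\theta+\tfrac{2}{27}$) that the paper leaves implicit.
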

\begin{rk}
Comparing with Remark \ref{rkmr} one notes that Theorem \ref{TR1} shows regions of parameters where TIpGGM is non unique. That is another interesting difference between real and $p$-adic cases.  
\end{rk}
In the case $p=3$, by Lemma \ref{exsolp3} and Theorem \ref{unique} we have the following
\begin{thm}\label{trunique}
Let $p=3$. Then we have the following
\begin{itemize}
\item[1.] The translation-invariant generalized $3$-adic Gibbs measures for HC model on the Cayley tree of order two exist iff one of the following statements hold:
   \begin{itemize}
    \item[1.a)] $|b|_3^2<27$,
    \item[1.b)] $|b|_3>27, 3|{\rm ord}_3b, b^*\in\mathbb Z_3^*[1,0]\cup\mathbb Z_3^*[2,2]$.
    \end{itemize}
 \item[2.] If the translation-invariant generalized $3$-adic Gibbs measures for HC model on the Cayley tree of order two exist, it will be unique.
\end{itemize}
\end{thm}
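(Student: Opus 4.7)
The plan is to treat this as a direct corollary of the work already done for fixed points of $F$ in the case $p=3$, transplanted to the Gibbs-measure language via the remark that the number of TIGpGMs for the HC-model on the Cayley tree of order two equals the number of translation-invariant solutions of the consistency equation \eqref{comp1}, which was reduced to equation \eqref{comp.2} and hence to the fixed-point equation \eqref{fix}. That reduction in turn produces, after the substitution $z=\sum_j \textbf{z}_j$ and then $z=t-2/3$, the depressed cubic \eqref{eqt} with $a=-1/3$ and $b=\theta+2/27$. So the task collapses to counting $3$-adic solutions of \eqref{eqt}.

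First I would restate that $N_{TIGpGM}$ equals the cardinality of $\mathrm{Fix}(F)$, invoking the derivation $(\ref{comp.2})\Leftrightarrow(\ref{fix})$ given earlier. Then, for part~(1), I would directly apply Lemma~\ref{exsolp3}: its two existence conditions read $|\theta+\frac{2}{27}|_3^2<27$ and $|\theta+\frac{2}{27}|_3>27$ with $3\mid \mathrm{ord}_3\!\left(\theta+\frac{2}{27}\right)$ and $b^*\in\mathbb{Z}_3^*[1,0]\cup\mathbb{Z}_3^*[2,2]$, which is exactly (1.a)--(1.b) once we use the notational convention $b=\theta+2/27$ that was fixed in Section~4.2.

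For part~(2), I would invoke Theorem~\ref{unique}, which asserts that whenever any condition of Lemma~\ref{exsolp3} is satisfied, $F$ has a unique fixed point. Since each translation-invariant solution of \eqref{comp.2} is in bijection with a solution $t\in\mathbb{Q}_3$ of \eqref{eqt} via $z=t-2/3$ and the componentwise formula $\textbf{z}_i=\lambda_i/(1+z)^k$, uniqueness of the fixed point of $F$ transfers to uniqueness of the TIGpGM.

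There is essentially no obstacle: the only thing to be careful about is checking that no solution is accidentally lost or introduced by the substitutions (in particular the excluded value $z=-1$, which would make \eqref{fix} ill-defined; but $z=-1$ corresponds to $t=1/3$, and a direct check shows this is not a root of \eqref{eqt} unless $\theta$ takes the specific value $-2/27-(1/3)^3+(1/9)=\ldots$, which does not occur in the relevant regimes). Apart from this sanity check, the argument is purely a citation-and-translation exercise between the fixed-point results of Section~3 and the Gibbs-measure setup of Section~4.
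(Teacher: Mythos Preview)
Your proposal is correct and matches the paper's approach exactly: the paper presents Theorem~\ref{trunique} as an immediate consequence of Lemma~\ref{exsolp3} (for existence) and Theorem~\ref{unique} (for uniqueness), translated into Gibbs-measure language via the identification of TIGpGMs with fixed points of $F$ through equation~\eqref{comp.2}. Your additional sanity check regarding the excluded value $z=-1$ is more than the paper provides, but the core argument is identical.
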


\subsection{Two-periodic GpGM}
By results of Section \ref{s32} we get the following theorems related to two-periodic GpGMs. 

\begin{thm}\label{Tp}Let $p\geq3$ be a prime and $N_{PGpGM}$ is number of $G_2^{(2)}$-perodic generalized $p$-adic Gibbs measures for HC model on the Cayley tree of order two. Then the following statements hold:
\begin{equation}\label{no}
N_{PGpGM}=\left\{
\begin{array}{lllllll}
2, \mbox ~{if}~ \left|\theta\right|_p>1;\\[2mm]
2, \mbox ~{if}~ \left|\theta\right|_p<1, 2\mid {\rm ord}_p \theta, \left(\frac{-\theta_0}{p}\right)=1;\\[2mm]
0, \mbox ~{if}~ \left|\theta\right|_p<1, 2\nmid {\rm ord}_p \theta \mbox ~{or}~\left(\frac{-\theta_0}{p}\right)=-1;\\[2mm]
2, \mbox ~{if}~ \left|\theta\right|_p=1, \left(\frac{\theta_0^2-4\theta_0}{p}\right)=1;\\[2mm]
0, \mbox ~{if}~ \left|\theta\right|_p=1, \left(\frac{\theta_0^2-4\theta_0}{p}\right)=-1;\\[2mm]
2, \mbox ~{if}~ \left|\theta\right|_p=1, 2\mid {\rm ord}_p (\theta-4), \left(\frac{|\theta-4|_p(\theta-4)(\operatorname{mod }p)}{p}\right)=1;\\[2mm]
0, \mbox ~{if}~ \left|\theta\right|_p=1, 2\nmid {\rm ord}_p (\theta-4)  \mbox ~{or}~ \left(\frac{|\theta-4|_p(\theta-4)(\operatorname{mod }p)}{p}\right)=-1;\\[2mm]
\end{array}\right.
\end{equation}
\end{thm}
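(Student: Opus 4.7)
The plan is to reduce the enumeration of $G_2^{(2)}$-periodic generalized $p$-adic Gibbs measures to the enumeration of $2$-periodic points of the operator $F$, and then read off the count from Theorem \ref{NPer2}. Since two vertices $x,y \in V$ lie in the same $G_2^{(2)}$-orbit if and only if $d(x,x^0)$ and $d(y,x^0)$ have the same parity, a $G_2^{(2)}$-periodic solution of the compatibility equation \eqref{comp1} on the Cayley tree of order $k=2$ is exactly a pair of vectors $\textbf{z}^{(\mathrm e)}, \textbf{z}^{(\mathrm o)} \in \mathbb Q_p^{\mathbb N_0}$ attached respectively to even and odd levels.

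First I would substitute this ansatz into \eqref{comp1} and notice that the two resulting systems read
\begin{equation*}
\textbf{z}^{(\mathrm e)}_i \;=\; \lambda_i \cdot \frac{1}{\bigl(1+\sum_{j\in\mathbb N_0}\textbf{z}^{(\mathrm o)}_j\bigr)^{2}}, \qquad \textbf{z}^{(\mathrm o)}_i \;=\; \lambda_i \cdot \frac{1}{\bigl(1+\sum_{j\in\mathbb N_0}\textbf{z}^{(\mathrm e)}_j\bigr)^{2}},
\end{equation*}
which, in the notation of Section 3, is precisely the statement that $\textbf{z}^{(\mathrm e)}$ is a fixed point of $F\circ F$ and $\textbf{z}^{(\mathrm o)} = F(\textbf{z}^{(\mathrm e)})$. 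Excluding the translation-invariant case $\textbf{z}^{(\mathrm e)}=\textbf{z}^{(\mathrm o)}$ (which would give a TIGpGM, not a genuinely two-periodic one) leaves exactly the set $\mathrm{Per}_2\{F\}$.

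Next I would argue that this correspondence is a bijection: each pair $(\textbf{z}^{(\mathrm e)},\textbf{z}^{(\mathrm o)}) = (\textbf{z}, F\textbf{z})$ with $\textbf{z}\in\mathrm{Per}_2\{F\}$ gives a unique $G_2^{(2)}$-periodic measure via the finite-volume distributions \eqref{mu}, and conversely any $G_2^{(2)}$-periodic generalized Gibbs measure is determined by such a pair (by $p$-adic Kolmogorov extension applied to the consistent family). Swapping the roles of $\textbf{z}^{(\mathrm e)}$ and $\textbf{z}^{(\mathrm o)}$ corresponds to the involution $\textbf{z}\leftrightarrow F\textbf{z}$ on $\mathrm{Per}_2\{F\}$, so the two elements of a periodic cycle do give the same count (two measures per cycle, matching the even count $2$ in Theorem \ref{NPer2}).

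Finally, the reduction of the scalar equation to $z(1+z)^2 = \theta$ via summation and the analysis of \eqref{eqperk=2} carried out in Section \ref{s32} show that $|\mathrm{Per}_2\{F\}|$ is governed exactly by the solvability of the quadratic $z^2+(2-\theta)z+1=0$ in $\mathbb Q_p$, whose discriminant is $D(\theta)=\theta^2-4\theta$. Applying Lemma \ref{lemmax2=a} to $D(\theta)$ in each of the regimes $|\theta|_p>1$, $|\theta|_p<1$, and $|\theta|_p=1$ (with the subcase $\theta_0=4$ treated separately, although now $\theta_0\ne 4$ is forced for the new case labels) yields precisely the branches of \eqref{no}. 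The only subtle point—and the main thing to verify carefully—is that when $\theta = 0$ or $\theta = 4$ the two solutions of \eqref{eqperk=2} collapse into fixed points and therefore do not contribute to $\mathrm{Per}_2\{F\}$; this was handled in Section \ref{s32} and carries over verbatim, so invoking Theorem \ref{NPer2} closes the argument.
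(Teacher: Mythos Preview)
Your proposal is correct and follows exactly the paper's approach: the paper states Theorem~\ref{Tp} as an immediate consequence of Section~\ref{s32} (specifically Theorem~\ref{NPer2}), with the identification $N_{PGpGM}=|\mathrm{Per}_2\{F\}|$ left implicit. You have simply made that identification explicit by spelling out the even/odd-level ansatz and the bijection with $2$-periodic orbits of $F$, which is a welcome clarification; the parenthetical about ``$\theta_0\ne 4$ is forced'' is slightly garbled (the last two branches of~\eqref{no} are precisely the $\theta_0=4$ subcase), but this does not affect the argument.
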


\begin{thm}\label{Tp2} Let $p=2$. Then the following statements hold true:
\begin{equation}\label{no2}
N_{PGpGM}=\left\{
\begin{array}{lllllllllllllll}
2, \mbox ~{if}~ |\theta|_2>1;\\[2mm]
2, \mbox ~{if}~ |\theta|_2=1 \mbox~{and}~ \theta_2=1;\\[2mm]
0, \mbox ~{if}~ |\theta|_2=1 \mbox~{and}~ \theta_2=0;\\[2mm]
2, \mbox ~{if}~ |\theta|_2=\frac{1}{2} \mbox~{and}~\theta_1=0;\\[2mm]
0, \mbox ~{if}~ |\theta|_2=\frac{1}{2} \mbox~{and}~\theta_1=1;\\[2mm]
2, \mbox ~{if}~ |\theta|_2=\frac{1}{4} \mbox~{and}~ \theta_1=0,\theta_2=\theta_3=1,\theta_4=0;\\[2mm]
0, \mbox ~{if}~ |\theta|_2=\frac{1}{4} \mbox~{and}~ (\theta_1-1)\theta_2\theta_3(\theta_4-1)=0;\\[2mm]
0, \mbox ~{if}~ |\theta|_2=\frac{1}{8};\\[2mm]
2, \mbox ~{if}~ |\theta|_2=\frac{1}{16} \mbox~{and}~ \theta_1=0,\theta_2=1;\\[2mm]
0, \mbox ~{if}~ |\theta|_2=\frac{1}{16} \mbox~{and}~ \theta_1=1 \mbox~or~\theta_2=0;\\[2mm]
0, \mbox ~{if}~ |\theta|_2\leq\frac{1}{32}, 2\nmid {\rm ord}_p \theta;\\[2mm]
2, \mbox ~{if}~ |\theta|_2\leq\frac{1}{32},2\mid {\rm ord}_p \theta, \theta_1=\theta_2=0;\\[2mm]
0, \mbox ~{if}~ |\theta|_2\leq\frac{1}{32},2\mid {\rm ord}_p \theta, \theta_1=1 \mbox ~{or}~\theta_2=1.
\end{array}\right.
\end{equation}
\end{thm}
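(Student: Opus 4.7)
The plan is to reduce the count of two-periodic generalized $p$-adic Gibbs measures to the already established count of $2$-periodic points of the operator $F$ at $p=2$, which is the second Theorem \ref{Tp2} of Section 3. The bridge is the observation, recorded in Subsection 4.3 (and used implicitly in Subsection 3.2), that a $G_2^{(2)}$-periodic solution of the consistency equation \eqref{comp1} for $k=2$ is precisely a collection $\{\textbf{z}_x\}_{x\in V}$ taking only two distinct vector values $\textbf{z}^{(1)},\textbf{z}^{(2)}\in\mathbb Q_2^{\mathbb N_0}$ alternating between the two cosets of $G_2^{(2)}$, so that \eqref{comp1} becomes the coupled pair $\textbf{z}^{(1)}_i=\lambda_i(1+\sum_j\textbf{z}^{(2)}_j)^{-2}$ and $\textbf{z}^{(2)}_i=\lambda_i(1+\sum_j\textbf{z}^{(1)}_j)^{-2}$. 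After summing in $i\in\mathbb N$ and setting $z^{(j)}=\sum_i \textbf{z}^{(j)}_i$, this becomes $z^{(1)}=f(z^{(2)})$ and $z^{(2)}=f(z^{(1)})$ with $f(z)=\theta/(1+z)^2$, i.e., $(z^{(1)},z^{(2)})$ is a $2$-cycle of $f$.

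First, I would check the convergence hypothesis: on any $G_2^{(2)}$-periodic solution, the series $\sum_i\textbf z^{(j)}_i$ must converge, and the consistency equation further requires $1+z^{(j)}\neq 0$ for $j=1,2$. Then I would invoke the reduction already carried out in Subsection 3.2: factoring out the fixed-point locus via \eqref{eqper2}, the remaining $2$-periodic points of $f$ are exactly the roots in $\mathbb Q_2$ of the quadratic \eqref{eqperk=2}, namely $z^2+(2-\theta)z+1=0$, under the side conditions $\theta\neq 0$ (which forces $z=-1$, excluded) and $\theta\neq 4$ (which collapses the two roots to the fixed point $z=1$). Each unordered pair of roots produces one genuinely two-periodic GGM, but the enumeration $|{\rm Per}_2(F)|$ already counts both orbit points, giving $N_{PGpGM}=|{\rm Per}_2(F)|$.

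Second, with this identification, the theorem is a direct translation of the earlier Theorem \ref{Tp2} about $|{\rm Per}_2(F)|$ at $p=2$. That earlier theorem was obtained by computing the $2$-adic expansion of the discriminant $D(\theta)=\theta^2-4\theta$ in each subrange of $|\theta|_2$ and then applying Lemma \ref{lemmax2=a} to test whether $D(\theta)$ is a square in $\mathbb Q_2$; the same subcase table is simply re-exported here as the count of PGpGMs. One minor bookkeeping step is to verify that the present list of $13$ subcases reorganizes the $14$ subcases of the earlier theorem without gain or loss; in particular the earlier entry ``$|\theta|_2=\tfrac14,\theta_1=1$'' is subsumed by the present entry ``$|\theta|_2=\tfrac14$ and $(\theta_1-1)\theta_2\theta_3(\theta_4-1)=0$'' since $\theta_1=1$ makes the product vanish.

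The hard part is essentially non-existent here, since the quadratic solvability analysis has already been pushed through in the earlier Theorem \ref{Tp2}. The only residual care-points are the degenerate values $\theta\in\{0,4\}$, which must be excluded from the count of two-periodic GGMs because they yield either no admissible solution ($\theta=0$ forces $z=-1$, violating $1+z\neq 0$) or a fixed point rather than a genuine two-cycle ($\theta=4$). Both $\theta=0$ and $\theta=4$ fall into ranges ($|\theta|_2\leq 1$ and $|\theta|_2=1$ respectively) where the table's digit constraints are already compatible with the generic case, so no special entry is needed.
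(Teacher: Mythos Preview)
Your proposal is correct and mirrors the paper's own treatment: the paper gives no separate proof for this theorem, stating only ``By results of Section \ref{s32} we get the following theorems,'' i.e., it simply re-exports the table for $|{\rm Per}_2(F)|$ at $p=2$ as $N_{PGpGM}$ via the identification you spell out. Two small slips to clean up: your sentence ``each unordered pair of roots produces one genuinely two-periodic GGM, but \ldots\ $N_{PGpGM}=|{\rm Per}_2(F)|$'' is internally inconsistent (the two orbit points yield two distinct periodic assignments, one for each choice of coset labelling, which is why the count is $2$, not $1$); and $\theta=4$ has $|\theta|_2=\tfrac14$, not $|\theta|_2=1$, so it is handled by the $|\theta|_2=\tfrac14$ line of the table (where indeed $(\theta_1-1)\theta_2\theta_3(\theta_4-1)=0$, giving $N_{PGpGM}=0$ as it should).
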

\begin{rk}
	In \cite{KMR} for the real-valued case of the HC-model it is proven that there are 1 or 3 two-periodic GMs. But Theorem \ref{Tp2} shows that $p$-adic version of the model may have 0 or 2 two-periodic GpGMs. 
\end{rk}
\section*{Acknowledgements}
The work supported by the fundamental project (number: F-FA-2021-425)  of The Ministry of Innovative Development of the Republic of Uzbekistan.

\end{document}